\newtheorem{thm}{Theorem}[section] 
\newtheorem{assumption}[thm]{Assumption}
\newtheorem{lem}[thm]{Lemma}
\newtheorem{cor}[thm]{Corollary}
\newtheorem{prop}[thm]{Proposition}
\newtheorem*{lus}{Lusztig's conjecture}
\theoremstyle{definition}
\newtheorem{defn}[thm]{Definition}
\newtheorem{rem}[thm]{Remark}
\numberwithin{equation}{subsection}
\numberwithin{thm}{section}
\newcommand{\id}{\operatorname{Id}}
\newcommand{\Hom}{\text{\rm Hom}}
\newcommand{\Ext}{\operatorname{Ext}}
\newcommand{\N}{{\mathbb N}}
\newcommand{\Z}{\mathbb Z}
\newcommand{\inv}{^{-1}}
\newcommand{\R}{\mathbb R}
\newcommand{\fg}{{\mathfrak{g}}}
\newcommand{\tothewall}{T^{\mu}_{\lambda}}
\newcommand{\fromthewall}{T_{\mu}^{\lambda}}
\newcommand{\hd}{\operatorname{hd}}
\newcommand{\rad}{\operatorname{rad}}
\newcommand{\soc}{\operatorname{soc}}
\newcommand{\stab}{\operatorname{Stab}}
\newcommand{\ch}{\operatorname{ch}}
\let\tothe\tothewall
\let\fromthe\fromthewall
\newcommand{\wdel}{{\widetilde{\Delta}}}
\newcommand{\surj}{\rightarrow\mathrel{\mkern-14mu}\rightarrow}
\newcommand{\w}{\widetilde}
\newcommand{\zd}{\Delta^0}
\newcommand{\zn}{\nabla_0}
\newcommand{\zp}{P^0}
\newcommand{\sco}{\mathscr O}
\newcommand{\amd}{A^\lambda\textmd{-mod}}
\newcommand{\azmd}{A^\lambda_0\textmd{-mod}}
\newcommand{\bmd}{A^\mu\textmd{-mod}}
\newcommand{\bzmd}{A^\mu_0\textmd{-mod}}
\newcommand{\pr}{\operatorname{pr}}
\newcommand{\gmd}{G\textmd{-mod}}
\newcommand{\umd}{U_\zeta\textmd{-mod}}
\title{Grade zero part of forced graded algebras}
\author{Hankyung KO}
\address{Mathematisches Institut, Universit{\"a}t Bonn, 
53115 Bonn, Germany} \email{hankyung@math.uni-bonn.de}
\begin{document}
\begin{abstract}
The paper concerns a certain subcategory of the category of representations for a semisimple algebraic group $G$ in characteristic $p$, which arise from the semisimple modules for the corresponding quantum group at a $p$-th root of unity. 
The subcategory, thus, records the cohomological difference between quantum groups and algebraic groups.
We define translation functors in this category and use them to obtain information on the irreducible characters for $G$ when the Lusztig character formula does not hold.


\end{abstract}
\maketitle
\section{Introduction}

Parshall-Scott\cite{psforcedintegqha}, \cite{PS14} introduced a ``forced grading'' in trying to model the representation theory of algebraic groups in prime characteristics using the better known representation theory of quantum groups at roots of unity. 
Our particular attention is on the ``grade zero part'' of the forced grading (called $A_0$-mod in the paper). 
This can be thought of as the modules over an algebraic group $G$ that correspond to semisimple modules for the root of unity quantum group associated to the root system of $G$, and measures the cohomological difference between the two representation theories. 
The cohomology of the latter is well understood via the Kazhdan-Lusztig theory, while it is not so for $G$. 
Thus, the grade zero part can be thought of as cohomological complement of the Kazhden-Lusztig theory in the category of (rational) representations for $G$. 
In particular, the composition multiplicities in the grade zero part give the expression of an irreducible character for the quantum group (known by Lusztig's conjecture) in terms of the irreducible characters for the algebraic group (which are much less known and of great interest for many decades).

In the regular blocks, which consist of the modules with composition factors having regular highest weights, a work of Parshall-Scott \cite{psforcedintegqha}\footnote{The main result \cite[Theorem 6.3]{psforcedintegqha} of \cite{psforcedintegqha} is in fact conjectural, as errors were found recently in its proof and that of a supporting Lemma \cite[Lemma 4.10]{psforcedintegqha}. We thank the authors of \cite{psforcedintegqha} for letting us know.}\label{pserror} tells us that the grade zero part has a highest weight structure.
This paper begins the study of the grade zero part of singular blocks. 
We use the translation functors in order to take advantage of what is known in the regular case. 
Section \ref{strans} defines translation ``to the wall'' and translation ``out of the wall'' functors in the grade zero category. 
In fact, the usual translation to the wall functors turn out to restrict to the grade zero part. 
Then the left and right adjoints of translation to the wall functors play the role of translations out of the wall in our setting. 
Consideration of the grade zero translation functors also leads us to a useful application of the theory to the irreducible characters for algebraic groups involving the right descent sets. See Proposition \ref{mult}. The proposition is upgraded, under an extra assumption (Assumption \ref{assum}) whose investigation we leave to a future work, to what we would call a new linkage principle (Proposition \ref{newlinkage}). 
Also, under the same assumption, the grade zero part of the regular blocks decomposes into very small subcategories (Proposition \ref{regdecomp}), and translation to the wall functors are embeddings of some of these subcategories into the grade zero part of the singular blocks (Proposition \ref{into the singular category}).

The paper is benefited from discussions with Brian Parshall, Leonard Scott, and Geordie Williamson.

\section{Preliminaries}
Let $p$ be an odd prime number. 
Let $G$ be a semisimple simply connected split algebraic group over a field $k$ of characteristic $p$. 
We also assume that $k$ is algebraically closed. 
Fix a maximal torus $T$ and a Borel subgroup $B$ in $G$. 
Let $R$ be the root system, and $\Pi$ be the set of simple roots. 
Let $X=X(T)$ be the set of weights, $X^+$ be the set of dominant weights, $W_p$ be the affine Weyl group acting on $X$ by the dot action $w.\gamma=w(\gamma+\rho)-\rho$ where $\rho$ is the sum of all fundamental weights. (See Appendix for more details.)

Associated to $R$, one defines the quantized enveloping algebra, or the quantum group, $U_v$ over the field $\mathbb Q(v)$. We refer to \cite[II.H.2]{J} for its presentation by generators and relations.
To consider the root of unity specialization, we consider the integral form $U_\mathcal{A}$, ``the Lusztig form'' in $U_v$ over the ring $\mathcal A=\Z[v,v\inv]\subset \mathbb Q(v)$.
Now let $\zeta$ be a primitive $p$-th root of unity in some field $K$. Via the map $v\mapsto \zeta$, we obtain $U_\zeta=U_\mathcal{A}\otimes K$, the quantum group at a $p$-th root of unity. 
See \cite[II.H]{J} for more details.

 In order to connect the representation theories of $G$ and $U_\zeta$ directly, we choose our field $K$ and $k$ so that there is an integral ring in between, which provides a desired connection. That is, we start with a $p$-modular system $(K,\sco, k)$ to define $G$ and $U_\zeta$. This means that $\sco$ is a discrete valuation ring with quotient field $K$ (of characteristic zero) and residue field $k$ of characteristic $p$. For example, the ring $\sco=\mathscr A_{(v-1,p)}/(1+v+\cdots +v^{p-1})$ forms a $p$-modular system.

Then $G$ and $U_\zeta$ are related via the integral form $\w U_\zeta$ over $\sco$ of $U_\zeta$: we have $\w U_\zeta\otimes_\sco k\cong$Dist($G$). We identify the weight lattices for $U_\zeta$ and $\w U_\zeta$ (with respect to $U^0_\zeta$ and $\w U^0_\zeta$) with $X=X(T)$. We do the same for the dominant weights $X^+$ and the affine Weyl group $W_p$.

Consider the category $\operatorname{Rep}(G)$ of rational representations of $G$ and $U_\zeta$-mod (resp., $\w U_\zeta$-mod) of integrable finite dimensional $U_\zeta$-modules (resp., $\w U_\zeta$-modules) of type 1. 
The categories $\operatorname{Rep}(G)$ and $U_\zeta$-mod are highest weight categories, in the sense of Cline-Parshall-Scott \cite{CPShwc}, with the infinite poset $(X^+,\uparrow)$. We provide a definition (Definition \ref{uparrow}) and some discussion on the uparrow ordering $\uparrow$ in Appendix. We denote by $\Delta(\gamma)$ the standard object in $\operatorname{Rep}(G)$ of highest weight $\gamma\in X^+$, by $\nabla(\gamma)$ the costandard object, by $L(\gamma)$ the simple object. We use the notation  $\Delta_\zeta(\gamma),\nabla(\gamma),L_\zeta(\gamma)$ for the standard, costandard, simple object of highest weight $\gamma\in X^+$ in $U_\zeta$-mod.

\subsection{Reduction mod p}
 Standard and costandard modules in the two highest weight category $\gmd$ and $\umd$ are related via the integral version as follow. Choose a minimal admissible (i.e., $\sco$-free and $\w U_\zeta$-invariant) lattice $\w\Delta(\gamma)$ in $\Delta(\gamma)$.
This is done simply by picking a highest weight vector $v$ in $\Delta_\zeta(\gamma)$ and letting $\w\Delta(\gamma):=\w U_\zeta v$. For the costandard modules, we dualize this to take an admissible lattice $\w\nabla(\gamma)$ in $\nabla(\gamma)$ rather than discussing what a maximal lattice is. 
Then we have
 \begin{equation}\label{deltalattice}
 \w\Delta(\gamma)_K\cong\Delta_\zeta(\gamma),\ \ \ \ \ \ \ \w\nabla(\gamma)_K\cong\nabla_\zeta(\gamma). \end{equation}
 and
\begin{equation}\label{deltareduction}
\w\Delta(\gamma)_k\cong\Delta(\gamma),\ \ \ \ \ \ \ \ \w\nabla(\gamma)_k\cong\nabla(\gamma).
\end{equation}
(We write $M_K:=M\otimes_\sco K$, $M_k:=M\otimes_\sco k$ for an $\sco$-module $M$.) This procedure is called ``reduction mod $p$''.

The simple objects of the two highest weight categories are quite different. Recall the Steinberg tensor product theorems for the two cases:
\begin{equation}\label{steinG}
L(\gamma_0+p\gamma_1)\cong L(\gamma_0)\otimes L(p\gamma_1)\cong L(\gamma_0)\otimes L(\gamma_1)^{[1]},
\end{equation}
\begin{equation}\label{st quan}
L_\zeta(\gamma_0+p\gamma_1)\cong L_\zeta(\gamma_0)\otimes L_\zeta(p\gamma_1)\cong L_\zeta(\gamma_0)\otimes V(\gamma_1)^{[1]},
\end{equation}
where $\gamma_0\in X_p:=\{\gamma\in X^+\ |\ \langle \gamma
+\rho,\alpha^\vee\rangle <p,\ \ \forall \alpha\in \Pi\}$, $\gamma_1\in X^+$, the module $V(\gamma_1)$ is the simple module for $U(\fg)$ of highest weight $\gamma_1$, and $-^{[1]}$ are the Frobenius twists. 
Since the simple module $L(\gamma_1)$ has a character smaller than that of $V(\gamma_1)$, which has the Weyl character, we see that the irreducibles for $U_\zeta$ are in general larger than the irreducibles for $G$.

We now construct a class of highest weight $G$-modules whose characters agree with those of irreducible $U_\zeta$ modules. We do it by reducing the modules $L_\zeta(\gamma)$ mod $p$. 
Take a minimal admissible lattice $\w L^{\text{min}}(\gamma)$ in $L_\zeta(\gamma)$ and its dual $\w L^{\text{max}}(\gamma)$. Then define the $G$-modules\footnote{These modules are denoted by $\Delta^{\text{red}}(\gamma)$ and $\nabla_{\textmd{red}}(\gamma)$ in \cite{CPS7} and many other papers. It becomes clear in the next section why we use ``$0$'' in the notation.} 
\[\zd(\gamma):=\w L^{\text{min}}(\gamma)\otimes k\] and \[\zn(\gamma):=\w L^\text{max}(\gamma)\otimes k.\] These modules are not irreducible in general. We have by construction and \eqref{deltareduction}
$$\Delta(\gamma)\surj\zd(\gamma)\surj L(\gamma)$$ and
$$\nabla(\gamma)\surj\zn(\gamma)\surj L(\gamma).$$
For more properties of these reduction mod $p$ modules that we do not need in the paper, we refer the reader to \cite{CPS7}.

\subsection{Linkage principle}\label{link}
We review the linkage principle for $G$ and for $U_\zeta$. Let $C^-$ be the top antidominant alcove, and write $C^-_\Z:=C^-\cap X$. 
The affine Weyl group $W_p$ is generated by the reflections through the walls of $C^-$. These simple reflections form a system of Coxeter generators of $W_p$, which we denote by $S_p$.
Given a weight $\gamma$, we can find a unique weight $\mu\in\overline{C^-_\Z}$ and a (unique if and only if $\mu\in C^-$) Weyl group element $w\in W_p$ such that $\gamma=w.\mu$. Let $$W^+=\{w\in W_p\ |\ w.{C^-_\Z}\subset X^+\}.$$ 
By definition, the dominant weights in $W_p.\mu$ are exactly the elements in $W^+.\mu$. 
If $\mu$ is regular, i.e., $\mu\in C^-$, then the dominant weights in $W_p.\mu$ are in one to one correspondence with $W^+$. 
If $\mu$ is not regular (we call it singular), then we need more notations. Let $$J=S_p(\mu):=\{s\in S_p\ |\ s.\mu=\mu\},$$ and $W_J$ be the subgroup of $W_p$ generated by $J$.
So $W_J=\stab_{W_p}(\mu)$. Let $W^J$ be the set of minimal length representatives in $W_p/W_J$. Now we identify the dominant weights in $W_p.\mu$ with $$W^+(\mu):=W^+\cap W^J.$$

By the linkage principle \cite[II.6]{J}, \cite[\S 8]{APW}, the $G$-modules and $U_\zeta$-modules decomposes into the submodules (which are summands) having highest weights in the same $W_p$ orbits.
Using our notation, we can write this as the decomposition
$$\operatorname{Rep}(G)=\bigoplus_{\mu\in\overline{C^-_\Z}}(\operatorname{Rep}(G))[W^+(\mu).\mu]$$
and
$$\umd=\bigoplus_{\mu\in\overline{C^-_\Z}}(\umd)[W^+(\mu).\mu].$$
(Given a highest weight category $\mathcal C$ with a poset $\Lambda$ and an ideal $\Gamma \unlhd\Lambda$, we set $\mathcal C[\Gamma]$ to be the Serre subcategory of $\mathcal C$ generated by the irreducibles in $\{L(\gamma)\}_{\gamma\in\Gamma}$.)
It is now a standard fact that $\w U_\zeta$-mod also decomposes into $W_p$-orbits.

\subsection{Finite dimensional algebras}
We prefer working with highest weight categories that have finite posets. So take a finite ideal $\Gamma \subset X^+$. 
Then there is a finite dimensional $k$-algebra $A$ such that $A$-mod is equivalent to ($\operatorname{Rep}(G)$)$[\Gamma]$ and a finite dimensional $K$-algebra $A_\zeta$ such that $A_\zeta$-mod is equivalent to ($\umd)[\Gamma]$. We denote the standard, costandard, irreducible $A$-modules and $A_\zeta$-modules by the same notation as the corresponding $G$-modules and $U_\zeta$-modules. 

We can choose the algebras $A$, $\w A$, $A_\zeta$ so that 
\begin{equation}\label{algebrachoice}
A \cong \w A\otimes_\sco k\ \ \ \ \ \ \ \textmd{and}\ \ \ \ \ \ \ A_\zeta\cong \w A\otimes_\sco K
\end{equation} and the reduction mod $p$ from $U_\zeta$-modules to $G$-modules agrees with reduction mod $p$ from $A_\zeta$-modules to $A$-modules.

Considering finite dimensional algebras instead of the entire representation categories further provides finite dimensional projective modules.
We denote by $P(\gamma)$ the projective cover of the simple $A$-module $L(\gamma)$ of highest weight $\gamma\in \Gamma$. Similarly, $P_\zeta(\gamma)$ denotes the projective cover of $L_\zeta(\gamma)$ in $A_\zeta$-mod. These are all indecomposable projectives in both categories. Similarly denote the indecomposable injective modules by $I(\gamma)$ and $I_\zeta(\gamma)$.

By \S\ref{link} and \eqref{algebrachoice}, we have $$A=\bigoplus_{\mu\in\overline{C^-_\Z}}A^\mu,\ \ \ \ \ \ \ \ \ \ \w A=\bigoplus_{\mu\in\overline{C^-_\Z}}\w A^\mu,\ \ \ \ \ \ \ \ \ \ A_\zeta=\bigoplus_{\mu\in\overline{C^-_\Z}} A_\zeta^\mu,$$
where 
\[A^\mu\operatorname{-mod}\cong (\operatorname{Rep}(G))[\Gamma\cap W_p.\mu],\]
\[\w A^\mu\operatorname{-mod} \cong(\w U_\zeta\operatorname{-mod})[\Gamma\cap W_p.\mu],\]
\[A_\zeta^\mu\operatorname{-mod}\cong(\umd)[\Gamma\cap W_p.\mu],\]
and the algebras are related by
\[A^\mu\cong \w A^\mu\otimes_\sco k,\ \ \ \ \ \ \ \ \ \ \ \ \ A^\mu_\zeta\cong \w A^\mu\otimes_\sco K.\]

We may assume that the poset ideal $\Gamma$ satisfies $\Gamma\cap W_p.\lambda=\{w.\lambda\ |\ w\in W^+, w\leq vw_0\}$ for some $v\in W^+(\mu):=W^+\cap W^J$, where $w_J$ is the maximal element in $W_J$ and $w_0$ the maximal element in the finite Weyl group $W_f$. Note that $J$ is always a proper subset of $S_p$ and thus $W_J$ is a finite group.
In fact, $\Gamma$ is determined by this condition, depending on the choice of $v$.
This assumption ensures that the translation functors (see \S\ref{sstr}) are well defined between $A^\mu$-mod and $A^\lambda$-mod for any two weights $\lambda,\mu\in \overline{C^-_\Z}$ and satisfy basic properties as in Proposition \ref{tr}.


\subsection{Translation functors}\label{sstr}
The translation functors for $G$-modules and $U_\zeta$-modules readily give the translation functors for $A$-modules and $A_\zeta$-modules. We recall the definition and define the integral version of the translation functors.

Given a weight $\mu\in \overline{C^-_\Z}$, we denote by $\pr^p_\mu$ the projection 
$$\pr^p_\mu:A\textmd{-mod}\to\bmd$$ to the $\mu$ orbit. 
It maps an $A$-module $M$ to the largest direct summand of $M$ with composition factors of the form $L(w.\mu)$. 
We also have the projections $$\pr^\zeta_\mu: A_\zeta\textmd{-mod}\to A^\mu_\zeta\textmd{-mod}$$  and
$$\w\pr_\mu:\w A\textmd{-mod}\to \w A^\mu\textmd{-mod}$$ to the $\mu$ orbit. 
Since the linkage classes for the three cases are the same, we have 
\begin{equation}\label{tensorcommute}
\w\pr_\mu(-)\otimes_\sco k\cong \pr_\mu^p(-\otimes_\sco k)\ \ \textrm{ and }\ \ \w\pr_\mu(-)\otimes_\sco K\cong \pr^\zeta_\mu(-\otimes_\sco K).
\end{equation}

Now fix $\lambda,\mu\in \overline{C^-_\Z}$. The translation $$\tothe:\amd \to \bmd$$
and
$$\tothe:A^\lambda_\zeta\textrm{-mod} \to A^\mu_\zeta\textmd{-mod}$$
are defined as 
$$\tothe=\pr^p_\mu(-\otimes_k\Delta(\nu))\ \ \ \textrm{and}\ \ \ \ \tothe=\pr^\zeta_\mu(-\otimes_K\Delta_\zeta(\nu)),$$ 
where $\nu$ is the unique element in $W(\mu-\lambda)\cap X^+$. 
It will be clear from the context whether $\tothe$ is a functor on $A$-modules or on $A_\zeta$-modules.
The translation functors form adjoint pairs $(\tothe,\fromthe)$ and $(\fromthe,\tothe)$, are exact, preserve projectives and injectives.

The functors $\tothe$ and $\fromthe$ are easiest to study in case $\mu$ is in the closure of the facet containing $\lambda$. We list some important properties in this case. See \cite[II.7]{J} for more details. For simplicity, we assume $\lambda\in C^-$.

\begin{prop}\label{tr} Let $y\in W^+(\mu)$ such that $y.\lambda \in \Gamma$. So $y.\mu$ is in the upper closure of the facet containing $y.\lambda$. Let $x\in W_J$ and assume that $yx.\lambda, y.\mu \in \Gamma$.
\begin{enumerate}
\item $\tothewall\Delta(yx.\lambda)=\Delta(yx.\mu)=\Delta(y.\mu)$.
\smallskip
\item $\fromthewall\Delta(y.\mu)$ has a $\Delta$-filtration whose sections are exactly $\Delta(yz.\lambda)$ where each $z\in W_J$ occurs with multiplicity one.
\smallskip
\item $\tothewall L(yx.\lambda)=
\begin{cases}
L(y.\mu)\ \ \ \ \ \ \ \ \ \text{if } x=e,\\
0\ \ \ \ \ \ \ \ \ \ \ \ \ \ \ \ \ \text{otherwise}.
\end{cases}$
\smallskip
\item $\hd(\fromthewall L(y.\mu))\cong L(y.\lambda), \ \ \soc(\fromthewall L(y.\mu))\cong L(y.\lambda).$
\item $\tothewall P(y.\lambda)= P(y.\mu)^{\oplus |W_J|}$.
\smallskip
\item $\fromthewall P(y.\mu)=P(y.\lambda)$.
\end{enumerate}
We also have the dual statements and the quantum analogues, which we omit.
\end{prop}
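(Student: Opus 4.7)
The plan is to follow Jantzen's approach in \cite[II.7]{J} closely. Two tools recur throughout: (i) if $M$ has a $\Delta$-filtration, then $M \otimes \Delta(\nu)$ has a $\Delta$-filtration with sections $\Delta(\gamma + \tau)$ as $\gamma$ ranges over the sections of $M$ and $\tau$ over the weights of $\Delta(\nu)$, counted with multiplicity; and (ii) the two adjunctions $(\tothewall, \fromthewall)$ and $(\fromthewall, \tothewall)$, combined with exactness of both translations.

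For (1), take $\nu \in W(\mu-\lambda) \cap X^+$, apply (i) to $\Delta(yx.\lambda) \otimes \Delta(\nu)$, and then apply $\pr^p_\mu$. The hypothesis that $\mu$ lies in the closure of the facet of $\lambda$ forces exactly one weight $\tau$ of $\Delta(\nu)$ to land $yx.\lambda + \tau$ in $W_p.\mu$, namely the extremal weight yielding $yx.\mu = y.\mu$ (using $x \in W_J$); the saturation condition on $\Gamma$ imposed in the preceding subsection ensures no section lying inside $\Gamma$ is truncated. Part (2) is the parallel count with $\nu^* \in W(\lambda-\mu) \cap X^+$: exactly $|W_J|$ weights of $\Delta(\nu^*)$ translate $y.\mu$ into $W_p.\lambda$, each landing at a distinct $yz.\lambda$ for $z \in W_J$, and each contributing multiplicity one.

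For (3), apply $\tothewall$ to the surjection $\Delta(yx.\lambda) \surj L(yx.\lambda)$ and invoke (1): $\tothewall L(yx.\lambda)$ is a quotient of $\Delta(y.\mu)$, so it is either zero or has simple head $L(y.\mu)$. To decide, compute
\[\Hom(\tothewall L(yx.\lambda), L(y.\mu)) \cong \Hom(L(yx.\lambda), \fromthewall L(y.\mu))\]
by adjunction; a standard analysis of the composition factors of $\fromthewall L(y.\mu)$, all of which lie in $yW_J.\lambda$, shows this Hom is one-dimensional exactly when $x=e$. The head half of (4) is then an immediate adjunction reformulation of (3); the socle half is dual, using the other adjunction $(\tothewall, \fromthewall)$.

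Both translations preserve projectives because their adjoints are exact. Hence $\fromthewall P(y.\mu)$ is projective with simple head $L(y.\lambda)$ by (4), giving (6). For (5), adjunction yields
\[[\tothewall P(y.\lambda) : P(z.\mu)] = \dim \Hom(P(y.\lambda), \fromthewall L(z.\mu)) = [\fromthewall L(z.\mu) : L(y.\lambda)];\]
combining the $\Delta$-filtration of $\fromthewall \Delta(z.\mu)$ from (2) with the head/socle structure in (4), one reads off this multiplicity as $|W_J| \delta_{y,z}$. The main obstacle throughout is the facet-geometry bookkeeping in parts (1) and (2) — ensuring that precisely the claimed sections of the tensor-product filtration survive projection to the $\mu$- or $\lambda$-block and that no section inside $\Gamma$ is clipped — since the counts in (3)--(6) all rest on these two base cases.
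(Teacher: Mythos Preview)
Your treatment of (1)--(4) is fine; the paper simply cites \cite[II.7.11, 7.13, 7.15, 7.20]{J} for these, and you have sketched the standard arguments. For (6) your idea is right, but the reference to (4) is slightly off: knowing $\hd(\fromthewall L(y.\mu))\cong L(y.\lambda)$ only tells you that $L(y.\lambda)$ lies in the head of $\fromthewall P(y.\mu)$, not that the head is simple. The clean route---and the one the paper takes---is adjunction together with (3):
\[
\Hom_{A^\lambda}(\fromthewall P(y.\mu), L(w.\lambda))\cong \Hom_{A^\mu}(P(y.\mu),\tothewall L(w.\lambda))
\]
is one-dimensional exactly when $w=y$.

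The real gap is in (5). Your adjunction identity
\[
[\tothewall P(y.\lambda):P(z.\mu)]=[\fromthewall L(z.\mu):L(y.\lambda)]
\]
is correct, and the right-hand side does equal $|W_J|\,\delta_{y,z}$, but this does \emph{not} follow from (2) and (4) as you claim. Part (4) only pins down the head and socle of $\fromthewall L(z.\mu)$, accounting for two copies of $L(z.\lambda)$, not $|W_J|$; and (2) describes $\fromthewall\Delta(z.\mu)$, whose multiplicity $\sum_{w\in W_J}[\Delta(zw.\lambda):L(y.\lambda)]$ is in general neither $|W_J|$ nor an obvious upper bound for the quantity you want. For $|W_J|>2$ there is no way to read off the remaining $|W_J|-2$ copies from head/socle data alone.

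The paper avoids this by reversing the order: it proves (6) first, then observes from (1) and (2) that $\ch\tothewall\fromthewall M=|W_J|\,\ch M$ for every $A^\mu$-module $M$ (a character-level identity), and applies this with $M=P(y.\mu)$ together with (6) to get $\ch\tothewall P(y.\lambda)=|W_J|\,\ch P(y.\mu)$, forcing $\tothewall P(y.\lambda)\cong P(y.\mu)^{\oplus|W_J|}$. If you want to salvage your route, you would need essentially this same character identity (plus (3)) to compute $[\fromthewall L(z.\mu):L(y.\lambda)]$, at which point you are doing the paper's argument in disguise.
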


\begin{proof}
We prove (6) and (5). See \cite[II.7.11, 7.13, 7.15, 7.20]{J} for (1)-(4).

For (6), we already know that $\fromthe P(y.\mu)$ is projective. Also, $$\Hom_{A^\lambda}(\fromthe P(y.\mu), L(w.\lambda))\cong \Hom_{A^\mu}(P(y.\mu),\tothe L(w.\lambda))=
\begin{cases}
k\ \ \ \text{if } y=w,\\
0\ \ \ \text{otherwise}
\end{cases} $$ by (3). Thus the only possibility is that $\fromthe P(y.\mu)\cong P(y.\lambda)$

Now we prove (5). Again, we know that $\tothe P(y.\lambda)$ is a projective, so a direct sum of indecomposable projective. It remains to determine its character. By (1) and (2), $\ch \tothe\fromthe M= |W_J|\ch M$ for any $A^\mu$-module $M$. 
Applying this to $M=P(y.\mu)$ and using (6), we have $$\ch\tothe P(y.\lambda)=\ch\tothe\fromthe P(y.\mu)= |W_J|\ch P(y.\mu).$$ We necessarily have $\tothewall P(y.\lambda)\cong P(y.\mu)^{\oplus |W_J|}$.
\end{proof}

We can define the integral translation functor $\w\tothe$ in the same way, that is, 
\[\w\tothe:= \w\pr_\mu(-\otimes_\mathscr{O} \wdel(\nu))\] 
where $\nu$ is the unique element in $X^+\cap W(\mu-\lambda)$ and $\wdel(\nu)$ is a minimal admissible lattice of $\Delta_\zeta(\nu)$. The functor $\w\tothe$ is, among other things, exact, since $\w\Delta(\nu)$ is $\mathscr O$-free. Note also that $\w\tothe$ maps $\sco$-free modules to $\sco$-free modules.

The following two lemmas do not assume that $\lambda$ is regular. In particular, $\lambda$ and $\mu$ can be interchanged. We say $\w M$ is a lift of $M\in A^\lambda$-mod if $\w M$ is (identified with) some not-necessarily-$\mathscr O$-free $\w U_\zeta$-module such that $\w M_k:=\w M\otimes_{\mathscr O} k\cong M$.

\begin{lem}\label{trcommute}
Let $\w M$ be a lift of an $A^\lambda$-module $M$. Then $\w\tothe \w M$ is a lift of $\tothe M$.
\end{lem}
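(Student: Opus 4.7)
The plan is to verify the isomorphism $(\widetilde{T}^\mu_\lambda \widetilde{M}) \otimes_{\mathscr{O}} k \cong T^\mu_\lambda M$ by a short chain of natural isomorphisms obtained from unravelling both sides of the definition and then invoking the two ingredients made available earlier: the compatibility \eqref{tensorcommute} of the projections $\widetilde{\pr}_\mu$ and $\pr^p_\mu$ under reduction mod $p$, and the reduction identity $\widetilde{\Delta}(\nu) \otimes_{\mathscr{O}} k \cong \Delta(\nu)$ from \eqref{deltareduction}. Since $\widetilde{M}$ is not assumed to be $\mathscr{O}$-free, I want to arrange every step so that no unjustified flatness is used on $\widetilde{M}$; the module $\widetilde{\Delta}(\nu)$ being $\mathscr{O}$-free is by contrast harmless to tensor against.

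First I would write
\[
(\widetilde{T}^\mu_\lambda \widetilde{M}) \otimes_{\mathscr{O}} k
= \widetilde{\pr}_\mu\bigl(\widetilde{M} \otimes_{\mathscr{O}} \widetilde{\Delta}(\nu)\bigr) \otimes_{\mathscr{O}} k,
\]
where $\nu \in X^+ \cap W(\mu - \lambda)$ is the weight used to define both $T^\mu_\lambda$ and $\widetilde{T}^\mu_\lambda$. Applying \eqref{tensorcommute} to the $\widetilde{U}_\zeta$-module $\widetilde{M} \otimes_{\mathscr{O}} \widetilde{\Delta}(\nu)$ pushes the reduction mod $p$ inside the projection:
\[
\widetilde{\pr}_\mu\bigl(\widetilde{M} \otimes_{\mathscr{O}} \widetilde{\Delta}(\nu)\bigr) \otimes_{\mathscr{O}} k
\cong \pr^p_\mu\bigl((\widetilde{M} \otimes_{\mathscr{O}} \widetilde{\Delta}(\nu)) \otimes_{\mathscr{O}} k\bigr).
\]

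Next I would identify the argument of $\pr^p_\mu$ using the standard base change isomorphism in the symmetric monoidal category of $\mathscr{O}$-modules (which requires no flatness hypothesis), namely
\[
(\widetilde{M} \otimes_{\mathscr{O}} \widetilde{\Delta}(\nu)) \otimes_{\mathscr{O}} k
\cong (\widetilde{M} \otimes_{\mathscr{O}} k) \otimes_k (\widetilde{\Delta}(\nu) \otimes_{\mathscr{O}} k)
\cong M \otimes_k \Delta(\nu),
\]
the last step using \eqref{deltareduction} and the hypothesis $\widetilde{M}_k \cong M$. Chaining everything together gives
\[
(\widetilde{T}^\mu_\lambda \widetilde{M}) \otimes_{\mathscr{O}} k \cong \pr^p_\mu\bigl(M \otimes_k \Delta(\nu)\bigr) = T^\mu_\lambda M,
\]
which is the required identification.

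The proof is therefore essentially a bookkeeping exercise, so I do not anticipate a real obstacle; the only subtlety to watch is that $\widetilde{M}$ need not be $\mathscr{O}$-free, which is why I would invoke the base change identity in the form above rather than writing any step as a flat base change of $\widetilde{M}$. Naturality of all isomorphisms involved is automatic, so the argument also shows that the two functors $(\widetilde{T}^\mu_\lambda(-))_k$ and $T^\mu_\lambda((-)_k)$ on lifts are naturally isomorphic, which is the form in which the lemma will presumably be used later.
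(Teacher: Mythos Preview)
Your proof is correct and follows essentially the same chain of isomorphisms as the paper: unwind the definition, apply \eqref{tensorcommute} to commute reduction mod $p$ past the projection, then use base change together with \eqref{deltareduction} and the hypothesis $\widetilde{M}_k\cong M$. The extra remarks on avoiding flatness assumptions and on naturality are fine but not needed for the paper's purposes.
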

\begin{proof}
We have
\begin{align*}
(\w \tothe \w M)_k&= (\w\pr_\mu(\w M\otimes_\mathscr{O} \wdel(\nu)))\otimes_\sco k\\
&\cong \pr^p_\mu((\w M\otimes_\sco \wdel(\nu))\otimes_\sco k)\\
&\cong \pr^p_\mu((\w M\otimes_\sco k)\otimes (\wdel(\nu)\otimes_\sco k))\\
&\cong \pr^p_\mu(M\otimes_k \Delta(\nu))\\
&=\tothe M
\end{align*}
by \eqref{tensorcommute}, \eqref{deltareduction}, and definitions of translation functors.
\end{proof}

\begin{lem}\label{trcom2}
For an $\w A^\lambda$-module $\w M$, we have $$(\w\tothe \w M)_K\cong \tothe \w M_K,$$ where the second $\tothe$ is the translation functor in $A_\zeta$-modules.
\end{lem}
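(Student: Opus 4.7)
The plan is to mimic the proof of Lemma \ref{trcommute} verbatim, replacing $k$ by $K$ throughout and using the ``$K$-half'' of the compatibility statements instead of the ``$k$-half''. So the proof will be a chain of canonical isomorphisms, obtained by unwinding the definition of $\w\tothewall$, commuting base change past the projection $\w\pr_\mu$ and past the tensor product, and finally recognizing the result as the definition of $\tothewall$ applied to $\w M_K$.

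More explicitly, I would start from
\[
(\w\tothewall \w M)_K = \bigl(\w\pr_\mu(\w M\otimes_\sco \wdel(\nu))\bigr)\otimes_\sco K
\]
and apply the second half of \eqref{tensorcommute}, namely $\w\pr_\mu(-)\otimes_\sco K\cong \pr^\zeta_\mu(-\otimes_\sco K)$, to move the base change inside. Next, the associativity/base-change isomorphism $(\w M\otimes_\sco\wdel(\nu))\otimes_\sco K\cong \w M_K\otimes_K \wdel(\nu)_K$ splits the tensor product into its $K$-fibers. Using the first identity of \eqref{deltalattice}, namely $\wdel(\nu)_K\cong\Delta_\zeta(\nu)$, the expression becomes $\pr^\zeta_\mu(\w M_K\otimes_K\Delta_\zeta(\nu))$, which is $\tothewall\w M_K$ by the definition of the translation functor on $A_\zeta$-modules.

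I do not expect any serious obstacle: the statement is a direct ``generic fiber'' analogue of Lemma \ref{trcommute}, and the only ingredients needed are \eqref{tensorcommute}, \eqref{deltalattice}, and the evident compatibility of tensor product with base change, all of which are already in place. The only mild point of care is that \eqref{tensorcommute} is stated for both base changes simultaneously, so one should simply cite the appropriate half of it. No use is made here of $\mathscr O$-freeness of $\w M$, which is why (as the paper notes before the lemma) $\lambda$ and $\mu$ may be interchanged and $\lambda$ need not be regular.
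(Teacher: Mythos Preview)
Your proposal is correct and follows essentially the same chain of isomorphisms as the paper's own proof: unwind the definition of $\w\tothewall$, apply the $K$-half of \eqref{tensorcommute}, use base change for tensor products, invoke \eqref{deltalattice}, and recognize the result as $\tothewall\w M_K$.
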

\begin{proof}
By \eqref{tensorcommute} and \eqref{deltalattice}, we have
\begin{align*}
(\w \tothe \w M)_K&= (\w\pr_\mu(\w M\otimes_\mathscr{O} \wdel(\nu)))\otimes_\sco K\\
&\cong \pr^\zeta_\mu((\w M\otimes_\sco \wdel(\nu))\otimes_\sco K)\\
&\cong \pr^\zeta_\mu((\w M\otimes_\sco K)\otimes (\wdel(\nu)\otimes_\sco K))\\
&\cong \pr^\zeta_\mu(\w M_K\otimes_K \Delta_\zeta(\nu))\\
&=\tothe \w M_K.
\end{align*}
\end{proof}

\subsection{Grade zero part of the finite dimensional algebras}
Parshall and Scott introduced a ``forced graded algebra'' associated to the algebra $A$. The definition goes as $$\w{\textmd{gr}}A=\bigoplus_i(\w A\cap \rad^i A_\zeta/\w A\cap \rad^{i+1} A_\zeta)_k.$$
What we do here is to use the radical grading on $A_\zeta$, (i.e., the grading defined by $A_\zeta^i=\rad^i A_\zeta/\rad^{i+1} A_\zeta$) which behaves extremely well (it is standard Koszul; see \cite[\S 6]{PS15} and \cite{svv}), to define an associated graded algebra of $A$. 
We do not know if the algebra $A$ itself is graded (i.e., $\w{\textmd{gr}}A\cong A$).
The first conjecture of Parshall-Scott in \cite{PS14} expects $\w{\textmd{gr}}A$ to be standard Q-Koszul, under the Kazhdan-Lusztig correspondence between $\umd$ and a certain subcategory of the corresponding affine category O in a negative level.
This roughly says that the difference between the Koszul grading on $A_\zeta$ and the grading of $\w{\textmd{gr}}A$ is just in the grade zero part. 
We do not give the full definition of Q-Koszulity except that it requires the grade zero part
 $$A_0=(\w{\textmd{gr}}A)_0=(\w A/\w A\cap \rad A_\zeta)_k$$
to be quasi-hereditary. 
The algebra $A_0$ is what this paper concentrates on.
We can understand $A_0$ in terms of the module category as follows.
With the obvious map $A\surj A_0$ we can view $A_0$-mod as a full subcategory of $A$-mod consisting of the $A$-modules on which the ``forced'' radical $(\w A\cap \rad A_\zeta)_k$ of $A$ acts as zero.
A useful way to think of $A_0$-modules is to consider them as $A$-modules that are ``semisimple for $A_\zeta$''.

Some easy examples of $A_0$-modules are $L(\gamma),\zd(\gamma),\zn(\gamma)$ for $\gamma\in \Gamma$. The projective cover of $L(\gamma)$ in the category $\azmd$ is denoted by $P^0(\gamma)$, the injective envelope by $I^0(\gamma)$.
The orbit decomposition of $A$ gives
$$A_0=\bigoplus_{\mu\in\overline{C^-_\Z}}A_0^\mu.$$ 

We record the following ``regular'' result of Parshall-Scott, which is, by footnote \ref{pserror}, a conjecture now.
\begin{thm}\label{regqha}
Let $p\geq 2h-2$ and $\lambda\in C^-_\Z$. Then $A^\lambda_0$ is a quasi-hereditary algebra. In other words, $\azmd$ is a highest weight category. 
In this category $\Delta^0(w.\lambda)$ are the standard objects, $\zn(w.\lambda)$ are the costandard objects for $w\in W^+$ .
\end{thm}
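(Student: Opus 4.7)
The plan is to verify the Cline-Parshall-Scott axioms for $\azmd$ as a highest weight category with poset $\{w\in W^+: w.\lambda \in \Gamma\}$ under the uparrow order, taking $\zd(w.\lambda)$ and $\zn(w.\lambda)$ as standard and costandard objects. The first task is to check that $\zd(w.\lambda)$ and $\zn(w.\lambda)$ lie in $\azmd$ with the correct simple head and socle. Since $L_\zeta(w.\lambda) = \w L^{\min}(w.\lambda)_K$ is $A_\zeta$-simple, it is annihilated by $\rad(A_\zeta)$; hence the forced radical $\wA \cap \rad(A_\zeta)$ annihilates $\w L^{\min}(w.\lambda)$ and therefore its mod $p$ reduction $\zd(w.\lambda)$, placing $\zd(w.\lambda)$ in $\azmd$. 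The dual argument handles $\zn(w.\lambda)$, and the head, socle, and Jordan--H\"older content descend from those of $\Delta(w.\lambda)$ and $\nabla(w.\lambda)$ via the surjections $\Delta(w.\lambda) \twoheadrightarrow \zd(w.\lambda) \twoheadrightarrow L(w.\lambda)$ and their duals, combined with the linkage principle; in particular all further composition factors are $L(y.\lambda)$ with $y.\lambda \uparrow w.\lambda$.

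Next, for each $w\in W^+$ with $w.\lambda\in\Gamma$, I would construct a candidate projective cover as $\zp(w.\lambda) := P(w.\lambda)/((\wA \cap \rad A_\zeta)_k \cdot P(w.\lambda))$, with integral lift $\w{P^0}(w.\lambda) := \wP(w.\lambda)/((\wA \cap \rad A_\zeta) \cdot \wP(w.\lambda))$. Base-changing to $K$ collapses the denominator to $\rad(A_\zeta)\cdot P_\zeta(w.\lambda)$, giving $\w{P^0}(w.\lambda)_K \cong L_\zeta(w.\lambda)$; this identifies $\w{P^0}(w.\lambda)$ with an $\sco$-lattice in the simple quantum module, so it is $\sco$-free and $\zp(w.\lambda)=\w{P^0}(w.\lambda)_k$ is a genuine reduction mod $p$. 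Indecomposability and the primitivity of the relevant idempotent in $A_0$ show that $\zp(w.\lambda)$ is the projective cover of $L(w.\lambda)$ in $\azmd$.

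Finally, and this is where the real work lies, I would show that $\zp(w.\lambda)$ admits a $\zd$-filtration satisfying the BGG reciprocity $[\zp(w.\lambda):\zd(y.\lambda)] = [\zn(y.\lambda):L(w.\lambda)]$. The route is to start from a $\wdel$-filtration of $\wP(w.\lambda)$ lifting the $\Delta$-filtration of $P(w.\lambda)$ and to track how each section $\wdel(y.\lambda)$ behaves under the forced-radical quotient: its image inside $\w{P^0}(w.\lambda)$ should be the minimal admissible lattice $\w L^{\min}(y.\lambda)$, whose mod $p$ reduction is $\zd(y.\lambda)$. The multiplicities would then be pinned down by comparing characters against the dually constructed $\zn$-filtration of the injective envelope $\zi(w.\lambda)$. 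The genuine obstacle lies precisely in this last step: one must control how the Koszul/radical filtration of $A_\zeta$ interacts with the $\sco$-integral structure inside $\wA$ finely enough to guarantee $\sco$-freeness of the successive quotients and to rule out collapses or spurious extensions among the $\zd$-sections. This is exactly the kind of delicate compatibility where, per the footnote in the introduction, the argument of \cite[Lemma~4.10]{psforcedintegqha} was recently found to be defective.
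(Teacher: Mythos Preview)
The paper does not give an independent proof of this theorem: its entire argument is a citation to \cite[Theorem~6.3]{psforcedintegqha} via \cite[Corollary~3.2]{pspfiltr}, and the footnote in the introduction records that the cited result is now only conjectural because errors were found in its proof. Your proposal therefore takes a genuinely different route, and you correctly diagnose that the heart of the matter is the $\zd$-filtration of $\zp(w.\lambda)$ and the compatibility between the integral structure of $\wA$ and the radical filtration of $A_\zeta$---this is exactly where the Parshall--Scott argument breaks.

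That said, your second step contains a real error. You assert that base-changing $\w{P^0}(w.\lambda):=\wP(w.\lambda)/(\wA\cap\rad A_\zeta)\wP(w.\lambda)$ to $K$ yields $L_\zeta(w.\lambda)$, implicitly using $\wP(w.\lambda)_K\cong P_\zeta(w.\lambda)$. But as noted just before Proposition~\ref{projective}, one has
\[
\wP(w.\lambda)_K\;\cong\; P_\zeta(w.\lambda)\oplus\bigoplus_{y>w}P_\zeta(y.\lambda)^{\oplus n_y},
\]
with the $n_y$ not all zero in general. Hence $\w{P^0}(w.\lambda)_K$ is the head of this direct sum, namely $L_\zeta(w.\lambda)\oplus\bigoplus_{y>w}L_\zeta(y.\lambda)^{\oplus n_y}$, so $\w{P^0}(w.\lambda)$ is a lattice in a \emph{semisimple} $A_\zeta$-module, not a simple one. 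This does not doom the strategy---the resulting character identity $\ch\zp(w.\lambda)=\sum_y n_y\,\ch\zd(y.\lambda)$ is still consistent with a $\zd$-filtration and Brauer--Humphreys reciprocity---but you can no longer argue via ``minimal lattice in a simple module''; you must instead control which lattice in each summand $L_\zeta(y.\lambda)$ actually occurs and why the sections split correctly over $\sco$. That control is once again the delicate compatibility you already flagged as the genuine obstacle, so the gap in step two feeds directly back into the unresolved difficulty of step three.
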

\begin{proof}
This is a consequence of \cite[Theorem 6.3]{psforcedintegqha}; see \cite[
Corollary 3.2]{pspfiltr}.
\end{proof}

One of the defining properties of the highest weight category is that a projective object has filtration by standard objects. We denote by 
\[(P:\Delta^0(\lambda'))\]
the multiplicity of $\Delta^0(\lambda')$ in such a filtration of a projective $A^\lambda_0$-module $P$.
An immediate consequence of Theorem \ref{regqha} is the following reciprocity of Brauer-Humphreys type.
\begin{cor}\label{reciprocity}
Suppose $\azmd$ is a highest weight category, and $\lambda\in C^-_\Z$. Then the following holds.
\[(P(w.\lambda):\Delta^0(y.\lambda))=[\nabla_0(y.\lambda):L(w.\lambda)]=[\Delta_0(y.\lambda):L(w.\lambda)]\]
\end{cor}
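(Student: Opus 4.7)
The plan is to split the corollary into two equalities and prove each by different methods.

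For the first equality $(P(w.\lambda):\Delta^0(y.\lambda)) = [\nabla_0(y.\lambda):L(w.\lambda)]$, I would invoke the standard Brauer-Humphreys reciprocity available in any quasi-hereditary algebra, which is precisely what the hypothesis supplies. The input ingredients are the usual Hom/Ext properties available in any highest weight category: $\Hom(\Delta^0(\nu),\nabla_0(\mu)) = k\cdot\delta_{\nu,\mu}$ and $\Ext^1(\Delta^0(\nu),\nabla_0(\mu))=0$. Reading $P(w.\lambda)$ here as the projective cover of $L(w.\lambda)$ inside the category $\azmd$ (written $P^0(w.\lambda)$ earlier in the paper, where the $\Delta^0$-filtration lives), one applies $\Hom(-,\nabla_0(y.\lambda))$ to such a filtration and inducts on its length to obtain
\[
\dim\Hom(P(w.\lambda), \nabla_0(y.\lambda)) = (P(w.\lambda):\Delta^0(y.\lambda)).
\]
On the other hand, the projective cover property gives $\dim\Hom(P(w.\lambda), \nabla_0(y.\lambda)) = [\nabla_0(y.\lambda):L(w.\lambda)]$, and combining the two yields the first equality.

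For the second equality $[\nabla_0(y.\lambda):L(w.\lambda)] = [\Delta^0(y.\lambda):L(w.\lambda)]$, I would argue using formal characters. By construction $\Delta^0(y.\lambda) = \w L^{\text{min}}(y.\lambda)\otimes_\sco k$ and $\nabla_0(y.\lambda) = \w L^{\text{max}}(y.\lambda)\otimes_\sco k$, where both $\w L^{\text{min}}(y.\lambda)$ and $\w L^{\text{max}}(y.\lambda)$ are $\sco$-free lattices inside $L_\zeta(y.\lambda)$. Since formal character is preserved under tensoring an $\sco$-free module with $k$ over $\sco$, both $\ch\Delta^0(y.\lambda)$ and $\ch\nabla_0(y.\lambda)$ equal $\ch L_\zeta(y.\lambda)$. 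Linear independence of $\{\ch L(\gamma)\}_{\gamma\in X^+}$ then forces the composition multiplicities against every $L(w.\lambda)$ to agree.

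I do not anticipate a real obstacle: the first equality is a black-box application of quasi-hereditary theory once Theorem \ref{regqha} is in force, and the second is immediate from $\Delta^0(\gamma)$ and $\nabla_0(\gamma)$ being reductions mod $p$ of dual $\sco$-lattices inside the single module $L_\zeta(\gamma)$. The only mildly delicate point is a notational one, namely ensuring that $P(w.\lambda)$ is interpreted as the projective cover within $\azmd$ (rather than inside $A$-mod) so that the $\Delta^0$-filtration makes sense and the BGG-type computation goes through.
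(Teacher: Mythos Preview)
Your proposal is correct and supplies exactly the standard argument that the paper leaves implicit: the corollary is stated without proof, merely as ``an immediate consequence'' of Theorem~\ref{regqha}, and your two steps (Brauer--Humphreys reciprocity from the quasi-hereditary hypothesis, then equality of characters because $\Delta^0(y.\lambda)$ and $\nabla_0(y.\lambda)$ are both reductions of $\sco$-lattices in $L_\zeta(y.\lambda)$) are precisely what makes it immediate. Your observation that $P(w.\lambda)$ must be read as the projective cover $P^0(w.\lambda)$ in $\azmd$ is also correct and matches the paper's convention set just above the corollary.
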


\section{Translation and characters}\label{scharacter}

Here we introduce the character function to clarify some points that are already clear but hard to explain. The characters are defined on $G$-modules and $U_\zeta$-modules. By abusing notation we consider the character function for $A$-modules
$$\ch:A\textrm{-mod}\to \Z X$$
and for $A_\zeta$-modules
$$\ch:A_\zeta\textrm{-mod}\to \Z X.$$ For $\nu\in X^+$, the Weyl character \[\chi(\nu)=\ch\Delta(\nu)=\ch\Delta_\zeta(\nu)\] 
gives the characters for standard and costandard objects in the categories $A$-mod and $A_\zeta$-mod.
The images of both maps $\ch$ are in the $\Z$-span of $\{\chi(\nu)\}_{\nu\in X^+}$ in $\Z X$. Now we have the fourth projection
$$\pr_\mu:\Z\{\chi(\nu)\}_{\nu\in X^+}\to\Z\{\chi(\nu)\}_{\nu\in W^+.\mu}$$ defined in the obvious way. 

\begin{lem}\label{chpr}
We have $$\pr_\mu\circ \ch=\ch\circ\pr^p_\mu$$ and $$\pr_\mu\circ \ch=\ch\circ\pr^\zeta_\mu.$$
\end{lem}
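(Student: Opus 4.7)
The plan is to exploit the linkage decomposition on the module side and the $\Z$-linear independence of Weyl characters on the character side. First, for any $A$-module $M$, the linkage principle gives a direct sum decomposition $M = \bigoplus_{\mu \in \overline{C^-_\Z}} \pr^p_\mu M$ and hence
\[
\ch M \;=\; \sum_{\mu \in \overline{C^-_\Z}} \ch(\pr^p_\mu M).
\]
So it is enough to show that each summand $\ch(\pr^p_\mu M)$ lives in the subspace $\Z\{\chi(\nu) : \nu \in W^+.\mu\}$; then $\pr_\mu$ will simply pick it out.

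The key step is to verify that $\ch L(w.\mu) \in \Z\{\chi(z.\mu) : z \in W^+\}$ for every $w \in W^+(\mu)$. By the linkage principle applied to $\Delta(w.\mu)$, every composition factor is of the form $L(z.\mu)$ with $z \in W^+(\mu)$, so
\[
\chi(w.\mu) \;=\; \ch \Delta(w.\mu) \;=\; \sum_{z \in W^+(\mu)} [\Delta(w.\mu):L(z.\mu)]\, \ch L(z.\mu).
\]
This expression is unitriangular with respect to the uparrow order, so it inverts over $\Z$ and expresses each $\ch L(w.\mu)$ as a $\Z$-linear combination of $\chi(z.\mu)$'s. Since the composition factors of $\pr^p_\mu M$ are all of the form $L(w.\mu)$ with $w \in W^+(\mu)$, it follows that $\ch(\pr^p_\mu M) \in \Z\{\chi(\nu) : \nu \in W^+.\mu\}$, as desired.

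Finally, the Weyl characters $\{\chi(\nu)\}_{\nu\in X^+}$ are $\Z$-linearly independent in $\Z X$ (a classical fact, equivalent to the standard objects forming a basis of the Grothendieck group). Therefore $\Z\{\chi(\nu)\}_{\nu\in X^+} = \bigoplus_{\mu \in \overline{C^-_\Z}} \Z\{\chi(\nu) : \nu \in W^+.\mu\}$, the projection $\pr_\mu$ is the obvious one onto a summand, and applying it to $\ch M = \sum_\mu \ch(\pr^p_\mu M)$ kills all contributions with $\mu' \neq \mu$. This yields $\pr_\mu(\ch M) = \ch(\pr^p_\mu M)$. The quantum statement follows by the same argument, replacing the linkage principle and $\Delta(w.\mu)$ by their quantum analogues and using that $\ch \Delta_\zeta(w.\mu) = \chi(w.\mu)$ as well.

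There is no real obstacle here; this is a bookkeeping lemma. The only point requiring care is the unitriangular inversion and the $\Z$-linear independence of $\{\chi(\nu)\}$, both of which are standard.
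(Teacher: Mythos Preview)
Your proof is correct and follows essentially the same approach as the paper: both arguments hinge on the fact that, within a single $W_p$-orbit, the sets $\{\chi(\nu)\}_{\nu\in W^+.\mu}$ and $\{\ch L(\nu)\}_{\nu\in W^+.\mu}$ span the same $\Z$-submodule (via the unitriangular change of basis coming from the composition series of $\Delta(w.\mu)$), so that the character-side projection $\pr_\mu$ and the module-side projection $\pr^p_\mu$ pick out the same piece. The paper compares the $\chi$- and $L$-expansions of $\ch M$ directly rather than first invoking the module decomposition $M=\bigoplus_\mu \pr^p_\mu M$, but this is only a cosmetic difference.
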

\begin{proof}
Let $M$ be an $A$-module. We have two expressions
\begin{align*}
\ch M&=\sum_{\nu\in X^+}c_\nu\chi(\nu)=\sum_{\nu\in X^+}d_\nu\ch L(\nu),
\end{align*}
where the coefficients $c_\nu\in \Z$ and $d_\nu\in \N$ are uniquely determined by $M$.
Then 
\begin{align*}
\pr_\mu(\ch M)&=\sum_{\nu\in W^+.\mu}c_\nu\chi(\nu)
=\sum_{\nu\in W^+.\mu}d_\nu\ch L(\nu),
\end{align*}
since $\chi(\nu)$ with $\nu\in W^+.\mu$ is a linear combination of $\{\ch L(x.\mu)\}_{x\in W^+(\mu)}$.
On the other hand, $$\ch(\pr^p_\mu M)=\sum_{\nu\in W^+.\mu}d_\nu\ch L(\nu)$$ by the definition of $\pr^p_\mu$, since $d_\nu=[M:L(\nu)]$. This shows the first identity. 

We obtain a proof for the second identity from the proof for the first if we replace $L(\nu)$ by $L_\zeta(\nu)$ everywhere.
\end{proof}

\begin{lem}\label{trch}
Suppose $M$ is an $A^\lambda$-module, $N$ is an $A^\lambda_\zeta$-module, and $\ch M= \ch N$ in $\Z X$. Then for any $\lambda,\mu\in \overline{C^-_\Z}$, we have the equality $\ch\tothe M=\ch\tothe N$ in $\Z X$.
\end{lem}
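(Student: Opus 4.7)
The plan is to chain together Lemma \ref{chpr} with the fact that the tensor factor $\Delta(\nu)$ on the $A$-side and $\Delta_\zeta(\nu)$ on the $A_\zeta$-side have identical characters.

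First I would unpack the definitions: by construction, $\tothe M = \pr^p_\mu(M \otimes_k \Delta(\nu))$ and $\tothe N = \pr^\zeta_\mu(N \otimes_K \Delta_\zeta(\nu))$, for $\nu$ the unique dominant weight in $W(\mu-\lambda)$. Taking characters, and using that $\ch$ is multiplicative under tensor products, I get
\[
\ch(M \otimes_k \Delta(\nu)) = \ch(M)\cdot \chi(\nu), \qquad \ch(N \otimes_K \Delta_\zeta(\nu)) = \ch(N)\cdot \chi(\nu),
\]
where I have used $\ch \Delta(\nu) = \ch \Delta_\zeta(\nu) = \chi(\nu)$ from the beginning of Section \ref{scharacter}. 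Since $\ch M = \ch N$ by hypothesis, the two right-hand sides agree in $\Z X$.

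Next I would check that these character elements lie in $\Z\{\chi(\nu)\}_{\nu\in X^+}$ so that $\pr_\mu$ is applicable. This is standard: Weyl characters form a $\Z$-basis for the image of $\ch$ on either category, and multiplying by $\chi(\nu)$ preserves this $\Z$-span. Applying $\pr_\mu$ to both sides of the equality $\ch(M\otimes_k \Delta(\nu)) = \ch(N\otimes_K \Delta_\zeta(\nu))$ and invoking Lemma \ref{chpr} on each side gives
\[
\ch\bigl(\pr^p_\mu(M\otimes_k \Delta(\nu))\bigr) = \pr_\mu\ch(M\otimes_k\Delta(\nu)) = \pr_\mu\ch(N\otimes_K\Delta_\zeta(\nu)) = \ch\bigl(\pr^\zeta_\mu(N\otimes_K \Delta_\zeta(\nu))\bigr),
\]
which is exactly $\ch \tothe M = \ch \tothe N$.

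There is no serious obstacle; the lemma is essentially a formal consequence of Lemma \ref{chpr} combined with the identity $\ch\Delta(\nu) = \ch\Delta_\zeta(\nu)$ and the multiplicativity of characters under tensor products. The only point requiring a moment's care is to note that the two translation functors use different base fields ($k$ versus $K$) but both become identifiable at the level of characters via the common Weyl character $\chi(\nu)$, which is precisely why the computation goes through without any appeal to a lift of $M$ to an $\sco$-form.
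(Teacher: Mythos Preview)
Your argument is correct and follows essentially the same route as the paper's own proof: unfold the definition of $\tothe$, use multiplicativity of characters together with $\ch\Delta(\nu)=\ch\Delta_\zeta(\nu)=\chi(\nu)$, and then apply Lemma~\ref{chpr} on each side. The extra remark you make about the characters lying in $\Z\{\chi(\nu)\}_{\nu\in X^+}$ is a harmless bit of additional care that the paper leaves implicit.
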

\begin{proof}
Letting $\nu$ be the unique element in $W(\mu-\lambda)\cap X^+$,
\begin{align*}
\ch\tothe M&=\ch(\pr^p_\mu(M\otimes_k\Delta(\nu)))\\
&=\pr_\mu(\ch(M\otimes_k\Delta(\nu)))\\
&=\pr_\mu(\ch M \times \chi(\nu))\\
&=\pr_\mu(\ch N\times\chi(\nu))\\
&=\pr_\mu(\ch(N\otimes_K\Delta_\zeta(\nu)))\\
&=\ch(\pr^\zeta_\mu(N\otimes_K\Delta_\zeta(\nu))\\
&=\ch\tothe N,
\end{align*}
where the second and sixth equality follows by Lemma \ref{chpr}, and the rest are either obvious or by definition.
\end{proof}

Denote by $\widetilde P(\gamma)$ the projective cover of $L(\gamma)$ in $\widetilde A$-mod. This is an $\sco$-free module such that $\widetilde P(\gamma)_k\cong P(\gamma)$, and has a character equal to the character of $P(\gamma)\in A$-mod. Note that while $\widetilde P(\gamma)_k\cong P(\gamma)$, we have $\widetilde P(\gamma)_K\cong P_\zeta(\gamma)\oplus P'$ where $P'$ is a direct sum of some $P_\zeta(\gamma')$ with $\gamma<\gamma'\in\Gamma$. (See \cite{psforcedintegqha} for more information.) We make the following observation on the summands of $P'$.

\begin{prop}\label{projective}
Let $\lambda\in \overline{C^-_\Z}$. Writing
\begin{equation}\label{projdecomp}
\widetilde P(w.\lambda)_K\cong P_\zeta(w.\lambda)\oplus \bigoplus_{w<y\in W^+} P_\zeta(y.\lambda)^{\oplus n_y},
\end{equation}
we have $n_{ws}=0$ for all $s\in S$ (with $w<ws\in\Gamma$).
\end{prop}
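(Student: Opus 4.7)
The plan is to apply the integral translation functor $\widetilde T^\mu_\lambda$ for a judiciously chosen weight $\mu \in \overline{C^-_\Z}$ with $\stab_{W_p}(\mu) = \{e, s\}$; this places $w$ in $W^+(\mu)$ (since $w < ws$) and makes $|W_J| = 2$ in Proposition \ref{tr}. I would first show that
\[
\widetilde T^\mu_\lambda \widetilde P(w.\lambda) \cong \widetilde P(w.\mu)^{\oplus 2}.
\]
Indeed, $\widetilde T^\mu_\lambda$ preserves projectivity via the integral analogue of the usual adjunction, so the left-hand side is a projective $\widetilde A^\mu$-module, and its mod-$p$ reduction is $T^\mu_\lambda P(w.\lambda) \cong P(w.\mu)^{\oplus 2}$ by Lemma \ref{trcommute} and Proposition \ref{tr}(5). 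Uniqueness of projective covers (lifting from $A^\mu$ to $\widetilde A^\mu$) then identifies the integral module.

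Next, tensoring both sides with $K$ yields two expressions for the same $A^\mu_\zeta$-module. Using Lemma \ref{trcom2}, the quantum version of Proposition \ref{tr}(5), and \eqref{projdecomp}, the left-hand side becomes
\[
T^\mu_\lambda\!\bigl(\widetilde P(w.\lambda)_K\bigr) \cong P_\zeta(w.\mu)^{\oplus 2} \oplus \bigoplus_{y>w} \bigl(T^\mu_\lambda P_\zeta(y.\lambda)\bigr)^{\oplus n_y},
\]
while applying \eqref{projdecomp} at $\mu$ to the right-hand side gives
\[
\widetilde P(w.\mu)_K^{\oplus 2} \cong P_\zeta(w.\mu)^{\oplus 2} \oplus \bigoplus_{\substack{z \in W^+(\mu) \\ z > w}} P_\zeta(z.\mu)^{\oplus 2m_z}
\]
for certain multiplicities $m_z \geq 0$. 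Counting copies of the summand $P_\zeta(w.\mu)$ via Krull--Schmidt forces
\[
\sum_{y > w} n_y \cdot \bigl[T^\mu_\lambda P_\zeta(y.\lambda) : P_\zeta(w.\mu)\bigr] = 0,
\]
and the nonnegativity of each factor makes every summand vanish individually.

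The remaining ingredient is to produce a composition factor: showing $[T^\mu_\lambda P_\zeta(ws.\lambda) : P_\zeta(w.\mu)] \geq 1$ will (together with the previous equality) force $n_{ws} = 0$. By Hom-reciprocity this multiplicity equals $[T_\mu^\lambda L_\zeta(w.\mu) : L_\zeta(ws.\lambda)]$, and the quantum Proposition \ref{tr}(1) combined with adjunction gives
\[
\Hom\bigl(\Delta_\zeta(ws.\lambda), T_\mu^\lambda L_\zeta(w.\mu)\bigr) \cong \Hom\bigl(T^\mu_\lambda\Delta_\zeta(ws.\lambda), L_\zeta(w.\mu)\bigr) \cong \Hom\bigl(\Delta_\zeta(w.\mu), L_\zeta(w.\mu)\bigr) \cong K.
\]
The image of any nonzero such map is a quotient of $\Delta_\zeta(ws.\lambda)$ and therefore has head $L_\zeta(ws.\lambda)$, witnessing $L_\zeta(ws.\lambda)$ as a composition factor of $T_\mu^\lambda L_\zeta(w.\mu)$, as required.

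I expect the main obstacle to be in the integral set-up: verifying that $\widetilde T^\mu_\lambda$ preserves $\widetilde A$-projectivity (so that the lift to $\widetilde P(w.\mu)^{\oplus 2}$ is legitimate) and that a $\mu \in \overline{C^-_\Z}$ with $\stab_{W_p}(\mu) = \{e, s\}$ is available in the given $p$-modular system. After these set-up points, the rest is a Krull--Schmidt multiplicity comparison built on the adjunction inputs of Lemmas \ref{trcommute}--\ref{trcom2}.
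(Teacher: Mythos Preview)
Your translation-functor approach is a genuinely different route from the paper's and, for regular $\lambda$, it works. The paper instead gives a two-line character argument: by BGG reciprocity, the multiplicity of $\chi(ws.\lambda)$ in $\ch P(w.\lambda)$ equals $[\Delta(ws.\lambda):L(w.\lambda)]=1$, and likewise the multiplicity in $\ch P_\zeta(w.\lambda)$ equals $[\Delta_\zeta(ws.\lambda):L_\zeta(w.\lambda)]=1$. Since $\ch P(w.\lambda)=\ch\widetilde P(w.\lambda)_K$, the extra summands $P_\zeta(y.\lambda)$ in \eqref{projdecomp} cannot contribute any copy of $\Delta_\zeta(ws.\lambda)$; but $P_\zeta(ws.\lambda)$ certainly has $\Delta_\zeta(ws.\lambda)$ at the top, so $n_{ws}=0$. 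This avoids translation functors, the integral adjunction, and any hypothesis on $p$ or on the regularity of $\lambda$.

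There is also a genuine gap in your generality. The proposition is stated for $\lambda\in\overline{C^-_\Z}$, so $\lambda$ may be singular. Your argument needs $\mu$ in the closure of the facet of $\lambda$ with $\stab_{W_p}(\mu)=\{e,s\}$, and you invoke Proposition~\ref{tr}, which is stated (and proved) under the simplifying assumption $\lambda\in C^-$. If $\lambda$ is singular and $s\notin S_p(\lambda)$, any $\mu$ on the $s$-wall lying in the closure of the facet of $\lambda$ will have $\stab_{W_p}(\mu)\supsetneq\{e,s\}$, so Proposition~\ref{tr}(5) does not apply as written and the clean ``$\oplus 2$'' count is lost. (When $s\in S_p(\lambda)$ the claim degenerates.) You can try to rescue the argument for singular $\lambda$ by reproving the relevant parts of Proposition~\ref{tr} with $\lambda$ singular, but at that point the paper's character argument is far more economical. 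Your self-flagged worry about the existence of such a $\mu$ is in fact a symptom of this issue; for regular $\lambda$ the existence is automatic (regular integral weights exist only when $p\ge h$), but for singular $\lambda$ it is the wrong requirement.
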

\begin{proof}
Let $s\in S$ be such that $w<ws$.
Write 
\[\ch P_\zeta(w.\lambda)=\sum_xc_x\chi (x.\lambda),\ \ \ \ch P(w.\lambda)=\sum_xd_x\chi(x.\lambda) .\]
Then 
\[c_{ws}=(P_\zeta(w.\lambda):\Delta_\zeta(ws.\lambda))=[\Delta_\zeta(ws.\lambda):L_\zeta(w.\lambda)]=1\]
and
\[d_{ws}=(P(w.\lambda):\Delta(ws.\lambda))=[\Delta(ws.\lambda):L(w.\lambda)]=1.\]
The last equality follows, for example, from that there is a unique (up to scalar) homomorphism from $\Delta(w.\lambda)$ to $\Delta(ws.\lambda)$ (The dual of \cite[II.7.19.d)]{J}). Since $w$ is maximal among the highest weights of the composition factors in $\rad\Delta(ws.\lambda)$, the universal property of the highest weight module $\Delta(w.\lambda)$ applies. The quantum case is the same.

Since $\ch P(w.\lambda)= \ch \widetilde P(w.\lambda)=\ch \widetilde P(w.\lambda)_K$, the equation \eqref{projdecomp} gives
\[\sum_xd_x\chi(x.\lambda)= \sum_xc_x\chi (x.\lambda)+ \sum_{w<y\in W^+} \ch P_\zeta(y.\lambda)^{\oplus n_y}.\]
By $c_{ws}=d_{ws}$ (and that $\{\chi(\gamma)\}$ forms a basis for the character ring), the projectives appearing in the second summand of the right hand side cannot contain $\Delta_\zeta(ws.\lambda)$ as a subquotient. This proves the claim.
\end{proof}

\section{Translating $A_0$-modules}\label{strans}

We define translation functors between $A_0$-module categories.
In this section, $\lambda\in C^-_\Z$ is regular, and $\mu\in \overline {C^-_\Z}$ is singular.
Then the next proposition shows that the functor $\tothe$ simply restricts to $A_0$-modules. More generally, if $\mu$ lies in the closure of the facet containing $\lambda$, then by the same proof the translation functor $\tothe$ is restricted to $A_0$-modules.

\begin{prop}\label{ztothe}
The translation $\tothe:A^\lambda$-mod$\to A^\mu$-mod maps $A^\lambda_0$-modules to $A^\mu_0$-modules. That is, we have a translation functor $$\tothe|_{A^\lambda_0\text{-mod}}:A^\lambda_0\textrm{-mod}\to A^\mu_0\textrm{-mod}$$
which we just denote by $\tothe$.
\end{prop}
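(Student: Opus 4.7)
The plan is to prove this via a lift-and-reduce argument. The key observation I would first establish is a lift criterion: if $\w N$ is an $\sco$-free $\w A^\lambda$-module whose generic fibre $\w N_K$ is semisimple as an $A^\lambda_\zeta$-module, then the special fibre $\w N_k$ lies in $\azmd$. Indeed, $\rad(A^\lambda_\zeta)$ annihilates $\w N_K$, so any element of $\w A^\lambda \cap \rad(A^\lambda_\zeta)$ sends $\w N$ into $\w N \cap 0 = 0$ via the embedding $\w N \hookrightarrow \w N_K$ afforded by $\sco$-freeness, and reducing mod $p$ shows that $(\w A^\lambda \cap \rad A^\lambda_\zeta)_k$ annihilates $\w N_k$.

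Next I would apply this criterion to the $\w A^\lambda$-module $\w A^\lambda_0 := \w A^\lambda/(\w A^\lambda \cap \rad A^\lambda_\zeta)$, regarded as a left $\w A^\lambda$-module via the quotient. It is $\sco$-free, since it injects into the finite-dimensional $K$-vector space $A^\lambda_\zeta/\rad A^\lambda_\zeta$ as a finitely generated $\sco$-submodule and is therefore a finitely generated torsion-free module over the DVR $\sco$. Its generic fibre is the semisimple algebra $A^\lambda_\zeta/\rad A^\lambda_\zeta$, and its mod-$p$ reduction is $A^\lambda_0$. Applying $\w\tothe$ yields an $\sco$-free $\w A^\mu$-module, since $\w\tothe$ preserves $\sco$-freeness; its mod-$p$ reduction is $\tothe A^\lambda_0$ by Lemma \ref{trcommute}, and its generic fibre is $\tothe(A^\lambda_\zeta/\rad A^\lambda_\zeta)$ by Lemma \ref{trcom2}. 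The latter is semisimple, because by the quantum analogue of Proposition \ref{tr}(3) each simple $L_\zeta(w.\lambda)$ is sent either to some $L_\zeta(y.\mu)$ or to $0$, and $\tothe$ commutes with direct sums. The lift criterion then gives $\tothe A^\lambda_0 \in \bzmd$.

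For an arbitrary $M \in \azmd$, I would use that $A^\lambda_0$ is a projective generator of $\azmd$, giving a surjection $(A^\lambda_0)^n \surj M$ for some $n$. By exactness of $\tothe$ this produces $(\tothe A^\lambda_0)^n \surj \tothe M$. The subcategory $\bzmd \subseteq \bmd$ is closed under quotients, since it is cut out by the condition that the ideal $(\w A^\mu \cap \rad A^\mu_\zeta)_k$ act as zero. Hence $\tothe M \in \bzmd$.

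The main obstacle I anticipate is the lift criterion itself, which requires genuine care about $\sco$-freeness: a torsion lift with semisimple generic fibre need not reduce to an $A_0$-module. A related subtlety is that $\azmd$ is not closed under extensions in $\amd$, so one cannot reduce to simples by induction on composition length; passing through the generator $A^\lambda_0$ as a whole (rather than through a composition series) is essential for this reason.
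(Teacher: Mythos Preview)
Your proof is correct and follows essentially the same lift-and-reduce strategy as the paper: pick a generator of $\azmd$ that admits an $\sco$-free lift with semisimple generic fibre, translate integrally, observe the generic fibre remains semisimple, and descend. The only cosmetic difference is that the paper works with the indecomposable projectives $P^0(w.\lambda)$ and their lifts (the degree-zero part of $\w P(w.\lambda)$), whereas you work with the whole regular module $A^\lambda_0$ and its lift $\w A^\lambda/(\w A^\lambda\cap\rad A^\lambda_\zeta)$; these are the same object up to taking direct summands. Your write-up is in fact more careful than the paper's about two points the paper leaves implicit: the role of $\sco$-freeness in the lift criterion, and the fact that one must pass through a generator and use closure of $\bzmd$ under quotients (rather than reduce to simples via exactness, which fails because $\azmd$ is not extension-closed in $\amd$).
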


\begin{proof}
Since $\tothe$ is exact, it is enough to show that $\tothe P^0$ is an $A^\mu_0$-module for any projective $A^\lambda_0$-module $P^0$. We may assume that $P^0=P^0(w.\lambda)$. 

Let $\w\zp$ be a lift of $\zp$. This is just the degree zero part of the integral projective $\w P(w.\lambda)$ which is the projective cover of $L(w.\lambda)$ in $\w A^\lambda$-mod. (See \cite[p.7]{psforcedintegqha}.) In particular, $\w\zp_K$ is semisimple. Since $\tothe$ sends irreducibles to irreducibles, we know that $\tothe(\w\zp_K)$ is semisimple. But by Lemma \ref{trcom2}, $\tothe(\w\zp_K)\cong (\w\tothe\w\zp)_K$ as $\w A^\mu_K$-modules. This shows that $\w\tothe\w\zp$ is an $\w A^\mu_0$-module. Thus, by Lemma \ref{trcommute}, $\tothe \zp$ is an $A^\mu_0$-module.
\end{proof}

\begin{prop}\label{tothe}
Let $w\in W^+$ and $J=S_p(\mu)$. We have $$\tothewall \Delta^0(w.\lambda)=
\begin{cases}
\Delta^0(w.\mu)\ \ \ \ \ \ \ \ \ \ \ \ \text{if } w\in W^J, \\
0\ \ \ \ \ \ \ \ \ \ \ \ \ \ \ \ \ \ \ \ \ \ \text{otherwise},
\end{cases}.$$ The same is true for $\zn(w.\lambda)$.
\end{prop}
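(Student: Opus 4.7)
The plan is to lift the problem to the integral level via Lemmas \ref{trcommute} and \ref{trcom2}. Set $N := \widetilde T^\mu_\lambda \widetilde L^{\min}(w.\lambda)$; since $\widetilde T^\mu_\lambda$ preserves $\sco$-freeness, $N$ is $\sco$-free with $N_k \cong \tothewall \Delta^0(w.\lambda)$ and $N_K \cong \tothewall L_\zeta(w.\lambda)$. Write $w = yx$ with $y \in W^+(\mu)$ and $x \in W_J$; by the quantum analogue of Proposition \ref{tr}(3), $N_K \cong L_\zeta(w.\mu)$ when $w = y \in W^J$ and $N_K = 0$ otherwise.

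If $w \notin W^J$, then $\sco$-freeness of $N$ together with $N_K = 0$ forces $N = 0$, so $\tothewall \Delta^0(w.\lambda) = N_k = 0$ as required.

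If $w \in W^J$, then $N$ is an admissible $\widetilde U_\zeta$-lattice in the simple module $L_\zeta(w.\mu)$, and the task reduces to identifying $N_k$ with $\Delta^0(w.\mu) = \widetilde L^{\min}(w.\mu)_k$. To produce a comparison map, I would apply the (exact and $A_0$-preserving, by Proposition \ref{ztothe}) functor $\tothewall$ to $\Delta(w.\lambda) \surj \Delta^0(w.\lambda)$ and invoke Proposition \ref{tr}(1), exhibiting $\tothewall \Delta^0(w.\lambda)$ as an $A^\mu_0$-module quotient of $\Delta(w.\mu)$ with character $\ch L_\zeta(w.\mu) = \ch \Delta^0(w.\mu)$ (via Lemma \ref{trch}). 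Characterizing $\Delta^0(w.\mu)$ as the largest $A^\mu_0$-module quotient of $\Delta(w.\mu)$ would then supply a canonical surjection $\Delta^0(w.\mu) \surj \tothewall \Delta^0(w.\lambda)$, which is forced to be an isomorphism by comparison of characters.

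The main obstacle is precisely this characterization of $\Delta^0(w.\mu)$ as the universal $A^\mu_0$-quotient of $\Delta(w.\mu)$; concretely, one must identify the integral submodule $\widetilde \Delta(w.\mu) \cap \rad \Delta_\zeta(w.\mu) \subseteq \widetilde \Delta(w.\mu)$ with $(\widetilde A \cap \rad A_\zeta) \cdot \widetilde \Delta(w.\mu)$ after mod $p$ reduction, so that the quotient $\Delta(w.\mu)/(\widetilde A \cap \rad A_\zeta)_k \Delta(w.\mu)$ agrees with $\Delta^0(w.\mu)$. The statement for $\nabla_0$ follows by the parallel argument with $\widetilde L^{\max}(w.\lambda)$ in place of $\widetilde L^{\min}(w.\lambda)$, or equivalently by transporting the $\Delta^0$ result through the duality exchanging $\widetilde L^{\min}$ and $\widetilde L^{\max}$.
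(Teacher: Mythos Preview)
Your setup for both cases is correct and matches the paper's: you lift to $N=\w T^\mu_\lambda\w L^{\min}(w.\lambda)$, use Lemmas~\ref{trcommute} and~\ref{trcom2}, and dispatch the case $w\notin W^J$ via $\sco$-freeness (the paper does this via characters using Lemma~\ref{trch}, which amounts to the same thing).

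For $w\in W^J$, however, you take an unnecessary detour that creates your ``main obstacle.'' Having already recognized $N$ as an admissible lattice in $L_\zeta(w.\mu)$, the paper finishes directly at the integral level: apply the exact functor $\w\tothe$ to the surjection $\w\Delta(w.\lambda)\surj\w\zd(w.\lambda)$ (rather than applying $\tothe$ at the $k$-level as you do), exhibiting $N$ as a quotient of $\w\tothe\w\Delta(w.\lambda)$. The latter is cyclic, generated by a highest weight vector---its $w.\mu$-weight space is $\sco$-free of rank one, and its mod~$p$ reduction is $\Delta(w.\mu)$, so Nakayama applies. Hence $N$ is also generated by a highest weight vector, which is precisely the defining property of the minimal lattice $\w L^{\min}(w.\mu)$. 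Then $N_k=\Delta^0(w.\mu)$ holds \emph{by definition} of $\Delta^0$, and your obstacle---characterizing $\Delta^0(w.\mu)$ as the maximal $A_0^\mu$-quotient of $\Delta(w.\mu)$---never arises. You had all the ingredients; you just descended to $k$ one step too early.
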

\begin{proof}
The second case follows from Lemma \ref{trch} because $\tothe L_\zeta(w.\mu)=0$ if $w\not\in W^J$. So assume $w\in W^J$. By construction, $\zd(w.\lambda)=(\w\zd(w.\lambda))_k=\w\zd(w.\lambda)\otimes_\sco k$ where $\w\zd(w.\lambda)$ is a minimal admissible lattice in $L_\zeta(w.\lambda)$. 
By Lemma \ref{trcom2}, $(\w\tothe \w\zd(w.\lambda))_K$ is isomorphic to $\tothe \w\zd(w.\lambda)_K$, which is isomorphic to
$\tothe L_\zeta(w.\lambda)\cong L_\zeta(w.\mu)$. Thus
$\w\tothe \w\zd(w.\lambda)$ is an admissible lattice in $L_\zeta(w.\mu)$. On the other hand, $\w\tothe \w\zd(w.\lambda)$ is a quotient of $\w\tothe \w\Delta(w.\lambda)$, since $\w\zd(w.\lambda)$ is a quotient of $\w \Delta(w.\lambda)$. Thus, $\w\tothe \w\zd(w.\lambda)$ is generated by a single vector, hence is a minimal lattice. Now the claim follows from Lemma \ref{trcommute}.
\end{proof}

Given $w\in W_p$, let $$R(w):=\{s\in S_p\ |\ ws<w\}.$$ This is called the right descent set of $w$. The following observation is an interesting corollary of Proposition \ref{tothe}.

\begin{prop}\label{mult}
Let $\lambda\in C^-_\Z$ and $p\geq h$. Then, for $w,x\in W^+$, $[\zd(w.\lambda):L(x.\lambda)]\neq 0$ implies $R(w) \subset R(x)$.
\end{prop}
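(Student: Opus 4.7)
The plan is to argue by contrapositive: I will show that if some simple affine reflection $s\in S_p$ satisfies $s\in R(w)$ but $s\notin R(x)$, then $[\zd(w.\lambda):L(x.\lambda)]=0$. Setting $J=\{s\}$, note that $W^J=\{y\in W_p\mid ys>y\}=\{y\mid s\notin R(y)\}$, so the hypotheses on $s$ translate into $w\notin W^J$ and $x\in W^J$.

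The first step is to use $p\geq h$ to choose, for the given $s$, a weight $\mu=\mu_s\in\overline{C^-_\Z}$ with $S_p(\mu)=\{s\}$, i.e., a lattice point on the wall of $C^-$ fixed by $s$ and on no other wall. This is the geometric input needed so that each $s\in S_p$ arises as the stabilizer of some point of $\overline{C^-_\Z}$. I would also enlarge the poset ideal $\Gamma$ if needed so that $w.\mu$ lies in it and $\tothe$ is defined as in Proposition \ref{tothe}. With such $\mu$ chosen, the argument reduces to a short exactness calculation: Proposition \ref{ztothe} gives an exact functor $\tothe:\azmd\to\bzmd$; Proposition \ref{tothe} yields $\tothe\zd(w.\lambda)=0$ because $w\notin W^J$; and Proposition \ref{tr}(3) gives $\tothe L(x.\lambda)=L(x.\mu)\neq 0$ because $x\in W^J$. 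If $L(x.\lambda)$ appeared as a composition factor of $\zd(w.\lambda)$, then exactness would force $L(x.\mu)=\tothe L(x.\lambda)$ to occur as a composition factor of $\tothe\zd(w.\lambda)=0$, a contradiction. This forces $[\zd(w.\lambda):L(x.\lambda)]=0$, and ranging $s$ over $R(w)\setminus R(x)$ yields the inclusion $R(w)\subset R(x)$.

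The only non-routine ingredient is the geometric input in the first step --- that every $s\in S_p$ can be realized as $S_p(\mu)$ for some $\mu\in\overline{C^-_\Z}$. Once this is in place, which is where the bound $p\geq h$ enters, the remainder of the proof is purely formal, relying only on the exactness of $\tothe$ and the vanishing statements already encoded in Propositions \ref{tothe}, \ref{ztothe}, and \ref{tr}.
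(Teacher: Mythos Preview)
Your proposal is correct and follows essentially the same approach as the paper: for each $s\in R(w)$ choose $\mu\in\overline{C^-_\Z}$ with $S_p(\mu)=\{s\}$ (this is where $p\ge h$ enters, via \cite[II.6.3]{J}), then use Proposition~\ref{tothe} to see $\tothe\zd(w.\lambda)=0$ and exactness of $\tothe$ to force $\tothe L(x.\lambda)=0$, hence $s\in R(x)$. Your contrapositive phrasing and explicit mention of enlarging $\Gamma$ are cosmetic differences only.
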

\begin{proof}
By assumption, for each $s\in R(w)$ there exists $\mu\in \overline{C^-_\Z}$ such that $J=S_p(\mu)=\{e,s\}$ (See \cite[II.6.3]{J}). Now we consider the translation $\tothe$. Suppose $[\zd(w.\lambda):L(x.\lambda)]\neq 0$. Since $ws<w$, by Proposition \ref{tothe}, we have $\tothe\zd(w.\lambda)=0$. By exactness of $\tothe$, the module $\tothe L(x.\lambda)$ should also be zero. This implies $xs<x$, hence $s\in R(x)$. The claim follows.
\end{proof}
\begin{rem}
The proposition and the proof generalize to the case where $\lambda$ is not regular provided that we can find enough singular weights on the wall of the facet of $\lambda$. 
\end{rem}

Let us now construct the translation functors of the opposite direction in $A_0$-module categories. By general category theory (adjoint functor theorem), we already know that the functor $\tothe:\azmd\to\bzmd$ has a left adjoint and a right adjoint. But we also know what they are. 
Consider the inclusion $\iota:\azmd\to\amd$ induced by the obvious map $A^\lambda\surj A^\lambda_0$. The left adjoint \[L:\amd\to \azmd\] takes an $A^\lambda$-module $M$ to its largest quotient which is an $A^\lambda_0$-module, and the right adjoint \[R:\amd\to\azmd\] takes an $A^\lambda$-module $M$ to its largest submodule which is an $A^\lambda_0$-module. The same is true for $A^\mu$ and $A^\mu_0$. 
Then for $M\in A^\mu_0$-mod and $N\in\azmd$, 
\begin{align*}
\Hom_{A_0^\lambda}(L \fromthe \iota M, N)&\cong\Hom_{A^\lambda}(\fromthe \iota M,\iota N)\\
&\cong\Hom_{A^\mu}(\iota M,\tothe \iota N)\\
&\cong\Hom_{A_0^\mu}(M,R \tothe \iota N)\\
&=\Hom_{A_0^\mu}(M,\tothe N),
\end{align*}
where the last equality makes use of Proposition \ref{ztothe}.
It follows that $L\circ\fromthe\circ\iota$ is a left adjoint of $\tothe:\azmd\to\bzmd$. Similarly, $R\circ\fromthe\circ\iota$ is a right adjoint of it. 
We simply denote them by $L\fromthe$ and $R\fromthe$ (The notation cannot be confused with the left and right derived functors since $\fromthe$ is exact.):

\[
\azmd\ \ \ \begin{array}{c}
 \xleftarrow{\ \ L\fromthe\ } \\ 
 \xrightarrow{\ \ \tothe\ \ }\\
\xleftarrow{\ \ R\fromthe\ }
 \end{array}
\ \ \ \bzmd\]

We assume for the rest of the paper $A^\lambda_0$ is quasi-hereditary.

\begin{prop}\label{fromthe}
For $w\in W^+(\mu)$, we have $$L\fromthe \zd(w.\mu)\cong \zd(w.\lambda)$$ and 
$$R\fromthe \zn(w.\mu)\cong \zn(w.\lambda).$$
\end{prop}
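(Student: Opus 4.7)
The plan is to use adjunction to construct canonical maps
\[
\phi\colon L\fromthe\zd(w.\mu)\to\zd(w.\lambda),\qquad \psi\colon\zn(w.\lambda)\to R\fromthe\zn(w.\mu),
\]
corresponding (under the adjunctions $(L\fromthe,\tothe)$ and $(\tothe,R\fromthe)$) to the identity maps via the canonical isomorphisms $\tothe\zd(w.\lambda)\cong\zd(w.\mu)$ and $\tothe\zn(w.\lambda)\cong\zn(w.\mu)$ from Proposition~\ref{tothe} (applicable since $w\in W^+(\mu)\subseteq W^J$). The proposition then amounts to showing $\phi$ and $\psi$ are isomorphisms.

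The first half---surjectivity of $\phi$ and injectivity of $\psi$---is formal. For any $x\in W^+$, adjunction and Proposition~\ref{tr}(3) give
\[
\Hom_{A_0^\lambda}(L\fromthe\zd(w.\mu),L(x.\lambda))\cong\Hom_{A_0^\mu}(\zd(w.\mu),\tothe L(x.\lambda)),
\]
which vanishes for $x\notin W^J$ (as $\tothe L(x.\lambda)=0$) and equals $\delta_{wx}$ for $x\in W^J$ (as $\zd(w.\mu)$ has simple head $L(w.\mu)$). Thus $L\fromthe\zd(w.\mu)$ has simple head $L(w.\lambda)$; dually, $R\fromthe\zn(w.\mu)$ has simple socle $L(w.\lambda)$. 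Since $\phi$ is nonzero (corresponding to the identity under the adjunction bijection) and $\zd(w.\lambda)$ is generated by its head, $\phi$ is surjective. For $\psi$: the triangle identity forces $\tothe\psi$ to be split injective, and as $\tothe$ is exact with $\tothe L(w.\lambda)=L(w.\mu)\neq 0$, any nonzero $\ker\psi$ would contain the simple socle $L(w.\lambda)$ of $\zn(w.\lambda)$, yielding $\tothe(\ker\psi)\neq 0$, a contradiction; so $\psi$ is injective.

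The remaining half reduces to matching characters, $\ch L\fromthe\zd(w.\mu)=\ch L_\zeta(w.\lambda)=\ch R\fromthe\zn(w.\mu)$, which I approach via the integral setting. With $\w\zd(w.\mu):=\w L^{\min}(w.\mu)$, Lemmas~\ref{trcommute} and~\ref{trcom2} identify $\w N:=\w\fromthe\w\zd(w.\mu)$ as an $\sco$-free lift of $\fromthe\zd(w.\mu)$ with $\w N_K\cong\fromthe L_\zeta(w.\mu)$, whose head and socle are both $L_\zeta(w.\lambda)$ by Proposition~\ref{tr}(4). The image $\w Q$ of the natural composition $\w N\hookrightarrow\w N_K\twoheadrightarrow L_\zeta(w.\lambda)$ is an $\sco$-pure quotient of $\w N$---its kernel $\w N\cap\rad\w N_K$, being the intersection of $\w N$ with a $K$-subspace, is $\sco$-pure---so $\w Q$ is a lattice in $L_\zeta(w.\lambda)$ with $\ch\w Q_k=\ch L_\zeta(w.\lambda)=\ch\zd(w.\lambda)$. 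Since $\w Q_K$ is simple, $\w Q_k$ lies in $A_0^\lambda\text{-mod}$; as a quotient of $\fromthe\zd(w.\mu)$, it is in turn a quotient of $L\fromthe\zd(w.\mu)$. Dually, the preimage $\w S\subseteq\w\fromthe\w\zn(w.\mu)$ of the socle $L_\zeta(w.\lambda)$ is an $\sco$-pure sublattice with $\w S_k\hookrightarrow R\fromthe\zn(w.\mu)$ of character $\ch L_\zeta(w.\lambda)$.

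The main obstacle is upgrading the surjection $L\fromthe\zd(w.\mu)\twoheadrightarrow\w Q_k$ (resp.\ the inclusion $\w S_k\hookrightarrow R\fromthe\zn(w.\mu)$) to an isomorphism; equivalently, showing that $(\w A^\lambda\cap\rad A_\zeta^\lambda)\w N$ is $\sco$-pure in $\w N$ (and the dual claim). I expect this to be established by analyzing the generators of $\w N$ as a $\w U_\zeta$-module---exploiting the cyclicity of $\w N_K$ with simple head, together with the minimality of $\w L^{\min}(w.\mu)$---or alternatively by combining the identification $L\fromthe P^0(w.\mu)\cong P^0(w.\lambda)$ (an immediate adjunction computation, since $L\fromthe$ preserves projectives as the left adjoint of the exact $\tothe$) with the projective cover $P^0(w.\mu)\twoheadrightarrow\zd(w.\mu)$ and the right exactness of $L\fromthe$. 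Granted the character equality, $\phi$ and $\psi$ become isomorphisms.
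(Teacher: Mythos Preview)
Your setup and the formal half are correct: the adjunction computations give the surjection $\phi$ and the simple-head identification is fine. The lattice construction of $\w Q$ is also sound and shows that $L\fromthe\zd(w.\mu)$ admits \emph{some} $A_0^\lambda$-quotient of character $\ch L_\zeta(w.\lambda)$. But as you yourself flag, the gap is real: you have shown $\dim L\fromthe\zd(w.\mu)\geq\dim\zd(w.\lambda)$ via $\w Q_k$, whereas what you need for $\phi$ to be an isomorphism is the opposite inequality. Neither of your proposed fixes closes this.

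Route (a) is too vague to assess; the purity of $(\w A^\lambda\cap\rad A_\zeta^\lambda)\w N$ in $\w N$ is precisely the hard point, and cyclicity of $\w N_K$ does not obviously help since $\w N_K\cong\fromthe L_\zeta(w.\mu)$ is \emph{not} simple (it has $|W_J|$ composition factors). For route (b): the identification $L\fromthe P^0(w.\mu)\cong P^0(w.\lambda)$ is indeed immediate and is proved independently in the paper (Proposition~\ref{fromthep}), and right exactness gives $P^0(w.\lambda)\twoheadrightarrow L\fromthe\zd(w.\mu)$. But this is an \emph{upper} bound on the character, useless here. To go further you would need to control the image of $L\fromthe\bigl(\ker(P^0(w.\mu)\to\zd(w.\mu))\bigr)$ inside $P^0(w.\lambda)$, and since nothing is known about the structure of $A_0^\mu$-mod at this point---in particular no $\zd$-filtration on $P^0(w.\mu)$ is available in the singular block---this kernel is inaccessible.

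The paper's argument is quite different and uses two ingredients you have not invoked: the standing assumption (stated just before the proposition) that $A_0^\lambda$ is quasi-hereditary, and Proposition~\ref{mult}. One argues by contradiction: if $L\fromthe\zd(w.\mu)$ properly covers $\zd(w.\lambda)$, pass to a minimal $A_0$-quotient $M$ sitting in a nonsplit extension $0\to L(x.\lambda)\to M\to\zd(w.\lambda)\to 0$ with $x>w$. Using the $\Delta$-filtration of $\fromthe\Delta(w.\mu)$ together with \cite[Lemma~2.4]{mine} one pins down $x=ws$ for some $s\in J$. Then the quasi-heredity of $A_0^\lambda$ forces $(P^0(w.\lambda):\zd(ws.\lambda))=[\zd(ws.\lambda):L(w.\lambda)]\neq 0$, contradicting Proposition~\ref{mult} since $s\in R(ws)$ but $s\notin R(w)$. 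The key missing idea in your attempt is this right-descent-set obstruction, leveraged through the highest-weight structure of $A_0^\lambda$-mod.
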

\begin{proof}
We prove the first isomorphism. The second is proved similarly. Note that $\fromthe\zd(w.\mu)$ is indecomposable. In fact, $\hd\fromthe\zd(w.\mu)$ is irreducible since
$$\Hom_{A^\lambda}(\fromthe\zd(w.\mu),L(x.\lambda))\cong\Hom_{A^\mu}(\zd(w.\mu),\tothe L(x.\lambda))=\begin{cases} k\ \ \ \ \ \ \textrm{if $w=x$}\\
0\ \ \ \ \ \ \textrm{otherwise}
\end{cases}
$$ by Proposition \ref{tr} (3).
Also, the proof of Lemma \ref{tothe} shows that the $A^\lambda$-module $\fromthe \zd(w.\mu)$ has $\zd(w.\lambda)$ as a quotient which is almost by definition an $A_0$-module. So it remains to show that if there is an $A_0$-module $M$ such that $$\fromthe \zd(w.\mu)\surj M\surj \zd(w.\lambda),$$ then $M=\zd(w.\lambda)$.

We suppose the contrary and deduce a contradiction. That is, suppose we have a non-split short exact sequence \begin{equation}\label{K}
0\to K\to M\to \zd(w.\lambda)\to 0
\end{equation} in $A_0$-mod for a nonzero $A_0$-module $K$.
In other words, $M$ represent a nontrivial element in $\Ext^1_{A_0}(\zd(w.\lambda),K)$. By replacing $M$ by its $A_0$-quotient, we assume that $K$ is irreducible. 
Thus there is some $x\in W$ with $L(x.\lambda)=K$.
Since $\zd(w.\lambda)$ has the universal property in $A_0$-mod, it should be the case that $x>w$. In particular, $M$ is not a quotient of $\Delta(w.\lambda)$. 
We claim that $x=ws$ for $s\in J$. 

Let us first show that $x=wz$ for some $z\in W_J$. We know that $\fromthe \Delta(w.\mu)$ has a $\Delta$-filtration whose sections are exactly $\Delta(wy.\lambda)$ for $y\in W_J$. 
Since $\fromthe \zd(w.\mu)$ is a quotient of $\fromthe \Delta(w.\mu)$, the moudule $M$ is also a quotient of $\fromthe\Delta(w.\mu)$. This implies that there is a $\Delta$-section $\Delta(wz.\lambda)$ of $\fromthe\Delta(w.\mu)$ such that $K$ is a composition factor of $\Delta(wz.\lambda)$. But since $\zd(w.\lambda)$ is a quotient of $\Delta(w.\lambda)$, \eqref{K} shows that $z\neq e$ and $K$ is a quotient of $\Delta(wz.\lambda)$. Thus $K\cong L(wz.\lambda)$.

Now we show that $z\in J$. We have by (the proof of) \cite[Lemma 2.4]{mine} that any section of the form $\Delta(wz'.\lambda)$ with $l(z')>1$ in $\fromthe\Delta(w.\lambda)$ is generated by another section $\Delta(wz''.\lambda)$ where $z''<z'$ in $W_J$. 
This implies that $l(z)=1$, proving our claim.

Recall that $M$ is a quotient of $\fromthe \Delta(w.\lambda)$, hence is indecomposable. So there is a surjective map $P^0(w.\lambda)\surj M.$ 
Since $P^0(w.\lambda)$ has a $\zd$-filtration, this factors through
\[\zp(w.\lambda)\surj N\surj M,\]
where $N$ is an extension of $\zd(w.\lambda)$ by some section $\zd(y.\lambda)$.
The head of this $\zd(y.\lambda)$ need to contain a copy of the irreducible $K$. Therefore we have $y=ws$.
In particular,
\[[\zd(ws.\lambda):L(w.\lambda)]=(\zp(w.\lambda):\zd(ws.\lambda))\neq 0.\] 
But this contradicts Proposition \ref{mult}, since $ws>w$ and $wss<ws$.
\end{proof}

\begin{rem}
One can alternatively use Proposition \ref{projective} in the last part of the proof.
\end{rem}

\begin{prop}\label{fromthep}
For $w\in W^+(\mu)$, we have $$L\fromthe \zp(w.\mu)\cong \zp(w.\lambda).$$
\end{prop}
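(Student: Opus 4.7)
The plan is to exploit the adjunction $(L\fromthe,\tothe)$ between $\bzmd$ and $\azmd$. The key observation is that $\tothe:\azmd\to\bzmd$ is exact, since it is the restriction, via Proposition \ref{ztothe}, of the exact translation functor on $\amd$. Applying the standard fact that a left adjoint of an exact functor preserves projective objects, one obtains that $L\fromthe\zp(w.\mu)$ is projective in $\azmd$. It then remains only to identify its head.

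For this I would compute, for each $x\in W^+$ with $x.\lambda\in\Gamma$,
\[\Hom_{A^\lambda_0}(L\fromthe\zp(w.\mu),L(x.\lambda))\ \cong\ \Hom_{A^\mu_0}(\zp(w.\mu),\tothe L(x.\lambda)).\]
By Proposition \ref{tr}(3), the translated simple $\tothe L(x.\lambda)$ vanishes unless $x\in W^+(\mu)=W^+\cap W^J$, in which case it equals $L(x.\mu)$. Since $\zp(w.\mu)$ is the projective cover of $L(w.\mu)$ in $\bzmd$, the Hom is therefore $k$ when $x=w$ and $0$ otherwise. Hence the head of $L\fromthe\zp(w.\mu)$ is one-dimensional and equal to $L(w.\lambda)$.

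A projective object of $\azmd$ whose head is the single simple $L(w.\lambda)$ can only be the projective cover $\zp(w.\lambda)$, which completes the argument. I do not anticipate a substantive obstacle here; the one point worth being careful about is that the preservation of projectives has to take place inside $\azmd$ and not merely inside $\amd$, but this is built into the adjoint-functor argument above and requires none of the finer filtration analysis used in the proof of Proposition \ref{fromthe}.
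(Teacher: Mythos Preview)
Your argument is correct and follows essentially the same strategy as the paper: projectivity via the left-adjoint-of-exact argument, followed by identification of the head. The only difference is cosmetic: the paper identifies the head by observing that $L\fromthe\zp(w.\mu)$ is a quotient of $\fromthe P(w.\mu)\cong P(w.\lambda)$ (Proposition \ref{tr}(6)), whereas you compute $\Hom$ to each simple directly via the adjunction and Proposition \ref{tr}(3); since (6) is itself proved from (3) by exactly this adjunction computation, the two routes are really the same.
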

\begin{proof}
The functor $L\fromthe$ is a left adjoint of an exact functor, hence sends projectives to projectives. So $L\fromthe \zp(w.\mu)$ is a direct sum of indecomposable projectives $\zp(x.\lambda)$. 

But $L\fromthe\zp(w.\mu)$ is, by construction, a quotient of $\fromthe\zp(w.\mu)$ which is, by exactness of $\fromthe$, a quotient of $\fromthe P(w.\mu)\cong P(w.\lambda)$. Thus $L\fromthe\zp(w.\mu)$ is indecomposable and has the head isomorphic to $L(w.\lambda)$. This shows $L\fromthe \zp(w.\mu)\cong \zp(w.\lambda).$
\end{proof}

\section{An extra assumption}

We explore in this section some strong consequences of an extra assumption together with Theorem \ref{regqha}. 
We show later that this extra assumption is satisfied if $p$ is large enough, but we do not know whether it is true or not for general $p$.
We keep our convention $\lambda\in C^-_\Z$, $\mu\in\overline{C^-_\Z}$, and $J=s_p(\mu)$ and assume the following.
\begin{assumption}\label{assum}
If $w\in W^+\setminus W^J$, then $\tothe P^0(w.\lambda)=0$.
\end{assumption}

Then we have the other direction of Proposition \ref{mult}
to obtain a ``new linkage principle''.

\begin{prop}\label{newlinkage}
For $w,x\in W^+$, $[\zd(w.\lambda):L(x.\lambda)]\neq 0$ implies $R(w) = R(x)$.
\end{prop}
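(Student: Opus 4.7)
The plan is to establish the missing containment $R(x) \subseteq R(w)$, which together with Proposition \ref{mult} will give the claimed equality. I would argue by contradiction: suppose there exists some $s \in R(x) \setminus R(w)$. Following the strategy of Proposition \ref{mult}, I would pick $\mu \in \overline{C^-_\Z}$ with $J = S_p(\mu) = \{e,s\}$, so that $w \in W^J$ (because $ws > w$) while $x \notin W^J$ (because $xs < x$).

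Next, I would invoke the Brauer--Humphreys reciprocity of Corollary \ref{reciprocity} (valid because $A_0^\lambda$ is quasi-hereditary by Theorem \ref{regqha}) to translate the hypothesis $[\Delta^0(w.\lambda):L(x.\lambda)] \neq 0$ into the statement
\[
(P^0(x.\lambda) : \Delta^0(w.\lambda)) \neq 0,
\]
so that $\Delta^0(w.\lambda)$ appears as a section in some fixed $\Delta^0$-filtration of $P^0(x.\lambda)$. I would then apply the exact functor $\tothe$ to such a filtration. By Assumption \ref{assum}, $\tothe P^0(x.\lambda) = 0$ since $x \notin W^J$, and by exactness every section of the translated filtration must therefore vanish. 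But Proposition \ref{tothe} computes $\tothe \Delta^0(w.\lambda) = \Delta^0(w.\mu)$, which is nonzero as $w \in W^J$. This contradicts the vanishing and closes the argument.

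The substantive ingredients are the reciprocity (which promotes the composition-multiplicity hypothesis into a $\Delta^0$-filtration statement about a projective) and Assumption \ref{assum} (which is what actually kills the translated projective, and is precisely the input that goes beyond Proposition \ref{mult}). I do not anticipate a serious technical obstacle; the only subtlety is that Assumption \ref{assum} must be applied with our chosen $\mu$, so one is implicitly using the assumption uniformly over all singular $\mu$ in the walls of $C^-$, not just one fixed such $\mu$.
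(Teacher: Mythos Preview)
Your proof is correct and follows essentially the same approach as the paper: both invoke Proposition~\ref{mult} for $R(w)\subset R(x)$, then for an arbitrary $s\in R(x)$ choose $\mu$ with $S_p(\mu)=\{e,s\}$, use the reciprocity $(P^0(x.\lambda):\Delta^0(w.\lambda))=[\Delta^0(w.\lambda):L(x.\lambda)]\neq 0$, apply Assumption~\ref{assum} to get $\tothe P^0(x.\lambda)=0$, and conclude via Proposition~\ref{tothe} that $ws<w$. The only cosmetic difference is that you frame the second half as a contradiction (assuming $s\in R(x)\setminus R(w)$) whereas the paper proves directly that every $s\in R(x)$ lies in $R(w)$.
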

\begin{proof}
Suppose $[\zd(w.\lambda):L(x.\lambda)]\neq 0$.
The easy direction $R(w)\subset R(x)$ is established in Proposition \ref{mult}.
To show the other direction, take an arbitrary $s\in R(x)$ and choose some $\mu\in\overline{C^-_\Z}$ such that $J=S_p(\mu)={e,s}$.
Recall that $\azmd$ is highest weight with duality. 
So
\begin{equation*}\label{1}
(P^0(x.\lambda):\zd(w.\lambda))=[\zd(w.\lambda):L(x.\lambda)]\neq 0.
\end{equation*}
Since $xs<x$, we have by Assumption \ref{assum} $\tothe \zp(x.\lambda)=0$, hence $\tothe \zd(w.\lambda)=0$. 
This, together with Proposition \ref{tothe}, implies $ws<w$, that is, $s\in R(w)$.
\end{proof}

\begin{cor}\label{irrfromthe}
We have $$L\fromthe L(w.\mu)\cong L(w.\lambda)\cong R\fromthe L(w.\mu)$$ for $w\in W^+(\mu)$.
\end{cor}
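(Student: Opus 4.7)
The plan is to prove $R\fromthe L(w.\mu)\cong L(w.\lambda)$ first; the other isomorphism $L\fromthe L(w.\mu)\cong L(w.\lambda)$ then follows by duality, using $D\,L\fromthe\,D\cong R\fromthe$ (which holds because the contravariant duality $D$ commutes with $\tothe$, making $D\,L\fromthe\,D$ another right adjoint to $\tothe$) together with the self-duality of the simple modules $L(w.\mu)$ and $L(w.\lambda)$.

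First I would embed $R\fromthe L(w.\mu)$ in a costandard: applying the left exact $R\fromthe$ to the inclusion $L(w.\mu)\hookrightarrow\zn(w.\mu)$ and invoking Proposition \ref{fromthe} gives $R\fromthe L(w.\mu)\hookrightarrow\zn(w.\lambda)$. By Proposition \ref{newlinkage} applied to $\zn(w.\lambda)$ (whose composition-factor multiplicities agree with those of $\zd(w.\lambda)$ by duality), every composition factor $L(y.\lambda)$ of $R\fromthe L(w.\mu)$ satisfies $R(y)=R(w)$; since $w\in W^+(\mu)\subseteq W^J$ has $R(w)\cap J=\emptyset$, this forces $y\in W^J$, and in particular $\tothe L(y.\lambda)=L(y.\mu)\neq 0$ for every such $y$.

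Next I would apply the exact functor $\tothe$ to the embedding and use Proposition \ref{tothe} (recalling $w\in W^J$) to obtain $\tothe R\fromthe L(w.\mu)\hookrightarrow\zn(w.\mu)$. The counit $\tothe R\fromthe L(w.\mu)\to L(w.\mu)$ of the adjunction $(\tothe,R\fromthe)$, corresponding under adjunction to $\id_{R\fromthe L(w.\mu)}$, is nonzero and hence surjective. Moreover, $\zn(w.\mu)$ has simple socle $L(w.\mu)$ appearing with composition multiplicity one: its character equals $\ch L_\zeta(w.\mu)$, whose $w.\mu$-weight space is one-dimensional, while every other composition factor of $\zn(w.\mu)$ has highest weight strictly less than $w.\mu$. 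A submodule of $\zn(w.\mu)$ that surjects onto $L(w.\mu)$ and contains $L(w.\mu)$ with multiplicity one must itself equal $L(w.\mu)$, since otherwise the kernel of the surjection would have nonzero socle contained in $L(w.\mu)$ without $L(w.\mu)$ appearing among its composition factors.

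Hence $\tothe R\fromthe L(w.\mu)=L(w.\mu)$, and, by exactness of $\tothe$ together with the composition-factor analysis above, $[R\fromthe L(w.\mu):L(y.\lambda)]=[\tothe R\fromthe L(w.\mu):L(y.\mu)]=\delta_{yw}$, giving $R\fromthe L(w.\mu)\cong L(w.\lambda)$. The main obstacle is the length-one identification of $\tothe R\fromthe L(w.\mu)$, which relies on combining the inclusion into $\zn(w.\mu)$, the surjection supplied by the counit, and the multiplicity bound $[\zn(w.\mu):L(w.\mu)]=1$ coming from the character identity with $L_\zeta(w.\mu)$.
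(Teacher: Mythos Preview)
Your argument is correct, with one small omission: when you assert that the counit $\tothe R\fromthe L(w.\mu)\to L(w.\mu)$ is nonzero because it corresponds to $\id_{R\fromthe L(w.\mu)}$, you are implicitly assuming $R\fromthe L(w.\mu)\neq 0$. This is easy to supply---by Proposition~\ref{tr}(4) the socle of $\fromthe L(w.\mu)$ is $L(w.\lambda)$, which is an $A_0$-module and hence lies in $R\fromthe L(w.\mu)$---but it should be said, since otherwise the final multiplicity count would give $0$ rather than $L(w.\lambda)$.

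Your route differs from the paper's. The paper argues directly for $L\fromthe$: it observes (via right exactness and Proposition~\ref{fromthe}) that $L\fromthe L(w.\mu)$ is a quotient of $\zd(w.\lambda)$, and then invokes Jantzen's structural result \cite[II.7.20]{J} that any non-simple quotient of $\fromthe L(w.\mu)$ must contain a composition factor $L(x.\lambda)$ with $xs<x$ for some $s\in J$, which contradicts Proposition~\ref{newlinkage}. Your argument replaces this external structural input by an adjunction trick: you translate back with $\tothe$, use the counit together with the multiplicity bound $[\zn(w.\mu):L(w.\mu)]=1$ to force $\tothe R\fromthe L(w.\mu)\cong L(w.\mu)$, and then read off the composition length of $R\fromthe L(w.\mu)$ using that $\tothe$ is faithful on the relevant composition factors. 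Both proofs rest on Proposition~\ref{newlinkage} in the same way; the paper's is shorter but imports \cite[II.7.20]{J}, while yours stays entirely within the paper's own machinery (Propositions~\ref{tothe}, \ref{fromthe}, \ref{newlinkage} and the adjunctions) at the cost of a slightly longer chain of reductions. Your duality argument for deducing the $L\fromthe$ case from the $R\fromthe$ case is also fine and parallels the paper's ``the other case is similar''.
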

\begin{proof}
We only prove the claim for $L\fromthe$. 
Let $w\in W^+(\mu)$. 
We want to show that $L(w.\lambda)$ is the only quotient of $\fromthe L(w.\mu)$ that is an $A_0$-module. 
By \cite[II.7.20]{J}, any non-simple, non-trivial quotient of $\fromthe L(w.\lambda)$ has a composition factor $L(x.\lambda)$ with $xs<x$.
Thus if $L\fromthe L(w.\lambda)$ is not irreducible, then it contains such a composition factor.
This contradicts Proposition \ref{newlinkage}:
Since the functor $L$ (and thus $L\fromthe$) is right exact, the surjective map $\zd(w.\mu)\surj L(w.\mu)$ induces a surjective map $\zd(w.\lambda)\cong L\fromthe\zd(w.\mu)\surj L\fromthe L(w.\mu)$. 
\end{proof}

Proposition \ref{newlinkage} decomposes $\azmd$ into the ``right descent set linkage classes''.

\begin{prop}\label{regdecomp}
We have a decomposition 
$$\azmd=\bigoplus_{I\subset S_p}\mathcal C_I$$
where $\mathcal C_I$ is the Serre subcategory of $\azmd$ generated by $\{L(w.\lambda)\}_{R(w)=I}$.
\end{prop}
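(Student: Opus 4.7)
The plan is to show that each indecomposable projective $P^0(w.\lambda)$ lies entirely in the Serre subcategory $\mathcal C_{R(w)}$, and then to deduce the direct sum decomposition by inspecting projective covers. Once the projectives are pinned into single descent classes, the block decomposition is formal.

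By Theorem \ref{regqha}, $\azmd$ is a highest weight category, so $P^0(w.\lambda)$ admits a $\Delta^0$-filtration. BGG reciprocity (Corollary \ref{reciprocity}) gives
\[
(P^0(w.\lambda):\Delta^0(y.\lambda)) = [\Delta^0(y.\lambda):L(w.\lambda)],
\]
which by Proposition \ref{newlinkage} is zero unless $R(y) = R(w)$. Each section $\Delta^0(y.\lambda)$ appearing in the filtration then has, by Proposition \ref{newlinkage} applied once more, only composition factors $L(z.\lambda)$ with $R(z) = R(y) = R(w)$. Chaining these two applications, every composition factor of $P^0(w.\lambda)$ has right descent set equal to $R(w)$, i.e.\ $P^0(w.\lambda) \in \mathcal C_{R(w)}$.

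To conclude, take any $M \in \azmd$ with projective cover $P = \bigoplus_i P^0(w_i.\lambda) \surj M$, and regroup the summands by right descent set as $P = \bigoplus_I P_I$ with $P_I \in \mathcal C_I$; letting $M_I \subseteq M$ be the image of $P_I$, we have $M_I \in \mathcal C_I$ and $M = \sum_I M_I$. The orthogonality $\Hom_{A_0^\lambda}(\mathcal C_I, \mathcal C_{I'}) = 0$ for $I \neq I'$ is automatic: the image of such a morphism would simultaneously be a quotient living in $\mathcal C_I$ and a submodule living in $\mathcal C_{I'}$, and since the two Serre subcategories share no common simple object, it must vanish. This forces the sum $M = \bigoplus_I M_I$ to be direct, giving the claimed decomposition. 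The entire content is carried by Proposition \ref{newlinkage}; I do not expect a separate obstacle here beyond the careful bookkeeping just sketched.
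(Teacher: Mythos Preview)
Your argument is correct. Both your proof and the paper's rest entirely on Proposition \ref{newlinkage}; the difference is in how the block decomposition is extracted from it. The paper argues at the level of $\Ext^1$ between simples: a non-split extension of $L(w.\lambda)$ by $L(x.\lambda)$ must embed into $\nabla_0$ or project from $\Delta^0$, and then Proposition \ref{newlinkage} (together with its dual for $\nabla_0$) forces $R(w)=R(x)$. You instead show that each indecomposable projective $P^0(w.\lambda)$ already lies in $\mathcal C_{R(w)}$, by running Proposition \ref{newlinkage} through the $\Delta^0$-filtration and BGG reciprocity, and then read off the decomposition from projective covers. Your route has the mild advantage of only using the $\Delta^0$ version of Proposition \ref{newlinkage}, not its costandard dual; the paper's route is slightly more direct in that it identifies the precise obstruction (nonvanishing $\Ext^1$) rather than going through the projectives. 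One small point of phrasing: the Hom-orthogonality you state is between a pair $\mathcal C_I$, $\mathcal C_{I'}$, whereas directness of the sum needs $M_I \cap \sum_{I'\neq I} M_{I'}=0$; this follows by the same composition-factor argument (the intersection lies in $\mathcal C_I$ while the ambient sum has no composition factors with descent set $I$), so there is no real gap.
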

\begin{proof}
Suppose \[\Ext^1_{A^\lambda_0}(L(w.\lambda),L(x.\lambda))\neq 0,\] that is, there is a non-split short exact sequence 
\[0\to L(x.\lambda) \to M\to L(w.\lambda)\to 0,\]
where $M$ is some $A_0^\lambda$-module. 
By the linkage principle, 
we have either $x>w$ or $w>x$. 
If $x<w$, then $M$ is a quotient of $\zd(w.\lambda)$, hence $\zd(w.\lambda)$ has $L(x.\lambda)$ as a composition factor. 
Proposition \ref{newlinkage} shows that $R(x)=R(w)$.
If $w<x$, then $M$ is a submodule of $\zn(x.\lambda)$, and we can apply the dual of Proposition \ref{newlinkage} to get $R(x)=R(w)$.

Therefore, there is no extension between objects in $\mathcal C_I$ and objects in $\mathcal C_{I'}$ with $I\neq I'$, and we have the desired decomposition.
\end{proof}

\begin{lem}\label{rev1}
For $w\in W^+$, $R(w)\subseteq J$, we have $$\zp(w.\mu)\surj\tothe\zp(w.\lambda).$$ 
In particular, $\tothe\zp(w.\lambda)$ is indecomposable. 
\end{lem}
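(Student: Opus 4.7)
The plan is to identify the head of $\tothe\zp(w.\lambda)$ with the simple module $L(w.\mu)$. Once this is done, the two conclusions follow immediately: a module with simple head is local, hence indecomposable; and by the universal property of the projective cover $\zp(w.\mu)$ in $\bzmd$, any surjection $\tothe\zp(w.\lambda)\surj L(w.\mu)$ lifts through $\zp(w.\mu)$, yielding the asserted $\zp(w.\mu)\surj\tothe\zp(w.\lambda)$.

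To show the head is at most $L(w.\mu)$, I would compute $\Hom_{A^\mu_0}(\tothe\zp(w.\lambda),L(y.\mu))$ for each $y\in W^+(\mu)$ via a chain of adjunctions. Since $\bzmd\hookrightarrow\bmd$ is a full embedding, this is the same as $\Hom_{A^\mu}(\tothe\zp(w.\lambda),L(y.\mu))$, which by the $(\tothe,\fromthe)$-adjunction on the full module categories equals $\Hom_{A^\lambda}(\zp(w.\lambda),\fromthe L(y.\mu))$. Because $\zp(w.\lambda)\in\azmd$, every such map factors through the largest $A_0$-submodule $R\fromthe L(y.\mu)$ of the target, and Corollary \ref{irrfromthe} (using Assumption \ref{assum}) identifies this submodule with $L(y.\lambda)$. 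The resulting hom is $\Hom_{A^\lambda_0}(\zp(w.\lambda),L(y.\lambda))$, which is $k$ exactly when $y=w$, and zero otherwise.

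To match this with a nonzero quotient, I would push the standard surjection $\zp(w.\lambda)\surj L(w.\lambda)$ through the exact functor $\tothe$, obtaining $\tothe\zp(w.\lambda)\surj\tothe L(w.\lambda)\cong L(w.\mu)$, where the last identification is Proposition \ref{tr}(3) applied with $x=e$. Combining the two bounds, $\tothe\zp(w.\lambda)$ has simple head $L(w.\mu)$, from which the lemma follows.

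The main delicacy is the bookkeeping around the hypothesis: the condition on the descent set must be read so that $w\in W^+(\mu)$, which is what makes $L(w.\mu)$ a genuine simple $A^\mu$-module and what lets Proposition \ref{tr}(3) produce $\tothe L(w.\lambda)\cong L(w.\mu)$. Apart from this, the only non-routine ingredient is Corollary \ref{irrfromthe}, and hence Assumption \ref{assum}, which is essential to simplify $R\fromthe L(y.\mu)$ to the simple $L(y.\lambda)$; without it the adjunction computation would not collapse to a Kronecker delta and the head could in principle be larger.
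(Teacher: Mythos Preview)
Your argument is correct, and it runs along the same general line as the paper's: both identify the head of $\tothe\zp(w.\lambda)$ with $L(w.\mu)$ via an adjunction computation and then invoke the projective cover in $\bzmd$. The route, however, is genuinely different in one respect. You compute $\Hom$ to every simple $L(y.\mu)$ and collapse it using Corollary~\ref{irrfromthe}, which rests on Assumption~\ref{assum}. The paper instead first uses Proposition~\ref{tr}(5) to see that $\tothe P(w.\lambda)\cong P(w.\mu)^{\oplus N}$ surjects onto $\tothe\zp(w.\lambda)$, so the head can only contain copies of $L(w.\mu)$; it then bounds the multiplicity by computing
\[
\Hom_{A_0^\mu}(\tothe\zp(w.\lambda),\zn(w.\mu))\cong\Hom_{A_0^\lambda}(\zp(w.\lambda),R\fromthe\zn(w.\mu))\cong\Hom_{A_0^\lambda}(\zp(w.\lambda),\zn(w.\lambda))\cong k,
\]
using only Proposition~\ref{fromthe}. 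The upshot is that the paper's proof does not actually need Assumption~\ref{assum} at all (only the quasi-heredity of $A_0^\lambda$), whereas yours does; your approach is cleaner in that it avoids passing through the big projective $P(w.\lambda)$, but it buys that economy at the cost of a stronger hypothesis. Your reading of the descent condition as forcing $w\in W^+(\mu)$ is the intended one, and is what both arguments require.
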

\begin{proof}
By Proposition \ref{tr} (5), we have the surjection 
$$P(w.\mu)^{\oplus N}\cong\tothe P(w.\lambda)\surj\tothe\zp(w.\lambda),$$ 
for some integer $N$.
But since 
\begin{align*}
\Hom_{A_0^\mu} (\tothe\zp(w.\lambda),\zn(w.\mu))&\cong\Hom_{A^\lambda_0}(\zp(w.\lambda),R\fromthe\zn(w.\mu))\\
&\cong \Hom_{A^\lambda_0}(\zp(w.\lambda),\zn(w.\lambda))\\
&\cong k
\end{align*}
by Proposition \ref{fromthe}, 
we have to have $$P(w.\mu)\surj\tothe\zp(w.\lambda).$$ 
Since $\tothe\zp(w.\lambda)$ is an $A_0$-module, this map factors through $\zp(w.\mu)$.
\end{proof}

\begin{prop}\label{into the singular category}
The translation functor $\tothe$ induces an embedding of categories
\[\bigoplus_{I\subseteq J}\mathcal C_I\xrightarrow \tothe \bzmd\]
where $J=S_p(\mu)$. The left inverse is given by the functor $L\fromthe$.
\end{prop}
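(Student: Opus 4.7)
The plan is to show that the counit $\epsilon\colon L\fromthe\tothe\to\mathrm{id}$ of the adjunction $(L\fromthe,\tothe)$ is a natural isomorphism when restricted to $\bigoplus_{I\subseteq J}\mathcal C_I$; this yields full faithfulness of $\tothe$ on the subcategory via the standard calculation
\[
\Hom_{A_0^\mu}(\tothe M,\tothe N)\cong \Hom_{A_0^\lambda}(L\fromthe\tothe M, N)\cong \Hom_{A_0^\lambda}(M,N),
\]
and simultaneously exhibits $L\fromthe$ as the left inverse of $\tothe$.

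The crucial step is verifying the counit on the projective generators $\zp(w.\lambda)$ of the subcategory. Feeding the surjection $\alpha\colon \zp(w.\mu)\surj\tothe\zp(w.\lambda)$ of Lemma \ref{rev1} through the right exact functor $L\fromthe$, and using $L\fromthe\zp(w.\mu)\cong\zp(w.\lambda)$ from Proposition \ref{fromthep}, gives a surjection $L\fromthe(\alpha)\colon \zp(w.\lambda)\surj L\fromthe\tothe\zp(w.\lambda)$. Naturality of $\epsilon$ with respect to the head quotient $\zp(w.\lambda)\surj L(w.\lambda)$, combined with $\epsilon_{L(w.\lambda)}$ being an isomorphism by Corollary \ref{irrfromthe}, implies that the composition
\[
L\fromthe\tothe\zp(w.\lambda)\xrightarrow{\epsilon_{\zp(w.\lambda)}}\zp(w.\lambda)\surj L(w.\lambda)
\]
is surjective; Nakayama's lemma then forces $\epsilon_{\zp(w.\lambda)}$ itself to be surjective. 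Consequently $\beta:=\epsilon_{\zp(w.\lambda)}\circ L\fromthe(\alpha)$ is a surjective endomorphism of the finite-dimensional module $\zp(w.\lambda)$, hence an isomorphism, and this forces both $L\fromthe(\alpha)$ and $\epsilon_{\zp(w.\lambda)}$ to be isomorphisms as well.

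To propagate from projective generators to an arbitrary object $M$ of the subcategory, I would take a projective presentation $P_1\to P_0\to M\to 0$ inside $\bigoplus_{I\subseteq J}\mathcal C_I$, which exists because Proposition \ref{regdecomp} realises each $\mathcal C_I$ as a direct summand of $\azmd$ with $\zp(w.\lambda)$ (for $R(w)=I$) as projective covers. Applying $L\fromthe\tothe$, which is right exact as the composition of the exact $\tothe$ with the right exact $L\fromthe$, and comparing with the identity via $\epsilon$, we get a commutative diagram whose top and bottom rows are right exact and whose outer two vertical maps are isomorphisms; the induced map on the cokernels $\epsilon_M$ is then an isomorphism by the universal property of cokernels.

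The main obstacle is the naturality-and-Nakayama argument for the surjectivity of $\epsilon_{\zp(w.\lambda)}$: this is the point where Corollary \ref{irrfromthe}, and hence Assumption \ref{assum}, enters decisively. The restriction to the indicated subcategory is also essential, since for weights outside its range Assumption \ref{assum} makes $\tothe\zp(w.\lambda)$ vanish and the counit becomes trivially zero rather than an isomorphism; once the projective case is handled, the remaining steps are standard right-exact-functor manipulations.
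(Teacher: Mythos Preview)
Your proof is correct and follows the same overall strategy as the paper: show that the counit of $(L\fromthe,\tothe)$ is an isomorphism on the indecomposable projectives $\zp(w.\lambda)$ of the subcategory, then propagate to arbitrary objects via right exactness.

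The difference is in how the projective case is handled. The paper works in the ambient category $A^\lambda$-mod: it uses the counit $\eta\colon\fromthe\tothe\to\id_{A^\lambda\text{-mod}}$ of the ordinary adjunction together with the identification $\fromthe\tothe P(w.\lambda)\cong P(w.\lambda)^{\oplus|W_J|}$ from Proposition~\ref{tr}(5),(6) to see directly that $\eta_{\zp(w.\lambda)}$ is surjective; passing to the largest $A_0$-quotient then gives the isomorphism. You instead stay entirely inside $A^\lambda_0$-mod and feed in Corollary~\ref{irrfromthe} to get the counit isomorphism on the simple head $L(w.\lambda)$, then lift via Nakayama and your surjective-endomorphism trick. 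Your route is a little more streamlined in that it recycles Corollary~\ref{irrfromthe}, already available under Assumption~\ref{assum}, rather than descending to the ambient category; the paper's route has the minor advantage that its projective-case computation does not itself appeal to Corollary~\ref{irrfromthe}. For the passage to general $M$, your two-step projective presentation with the five-lemma and the paper's one-step short exact sequence with the ``surjective for all $M$ forces injective for all $M$'' bootstrap are standard and interchangeable.
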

\begin{proof}
By Lemma \ref{rev1} $A_0$-module $\tothe\zp(w.\lambda)$ has a projective cover $\zp(w.\mu)$. Applying the right exact functor $L\fromthe$ to the surjective map $\zp(w.\mu)\surj\tothe\zp(w.\lambda)$ and using Proposition \ref{fromthep}, we have
\begin{equation}\label{s}\tag{*}
\zp(w.\lambda)\cong L\fromthe\zp(w.\mu)\surj L\fromthe\tothe\zp(w.\lambda).
\end{equation}
 We claim that this is an isomorphism. 
Since $\zp(w.\lambda)$ is an $A_0$-module, it is enough to show that $\zp(w.\lambda)$ is a quotient of $\fromthe\tothe\zp(w.\lambda)$. In fact, the adjoint pair $(\fromthe,\tothe)$ gives the natural transformation $$\eta:\fromthe\tothe\to\id_{\amd},$$ providing the commutative diagram
$$\begin{CD}
\fromthe\tothe P(w.\lambda)\cong P(w.\lambda)^{\oplus N}@>\eta_{P(w.\lambda)}>>P(w.\lambda)\\
@VV\fromthe\tothe(q) V@VVq V\\
\fromthe\tothe\zp(w.\lambda) @>\eta_{\zp(w.\lambda)}>>\zp(w.\lambda)
\end{CD}$$
with surjective upper horizontal and vertical maps. 
This shows that $\eta_{\zp(w.\lambda)}$ is surjective. Since $\zp(w.\lambda)$ and $L\fromthe\tothe\zp(w.\lambda)$ both have irreducible heads, any map of the form \eqref{s} should be an isomorphism. This proves our claim.

Now consider the adjoint pair $(L\fromthe,\tothe)$ and the natural transformation $$L\eta:L\fromthe\tothe\to\id_{\azmd} .$$ What we showed in the previous paragraph actually give
\begin{equation}\label{eqproj}
L\fromthe\tothe\zp(w.\lambda)\xrightarrow[L\eta_{\zp(w.\lambda)}]{\cong}\zp(w.\lambda).
\end{equation}
Using this, we to show that $L\eta$ restricts to an equivalence of functors. 
Let $M\in\bigoplus_{I\subseteq J}\mathcal{C}_I$ and take a projective cover $\zp$ of $M$ in $\azmd$.
Then $P^0\in\bigoplus_{I\subseteq J}\mathcal{C}_I$.
So there is a short exact sequence $$0\to K\to\zp\to M\to 0,$$ where $K$ is the kernel of the map $P^0\to M$. Since $L\fromthe\tothe$ is right exact, we have a commutative diagram
$$\begin{CD}
0 @>>> K @>>> \zp @>>>M @>>> 0\\
@. @AL\eta_{K} AA @AL\eta_{\zp} A\cong A @AL\eta_M AA @.\\
@. L\fromthe\tothe K @>>> L\fromthe\tothe\zp @>>>L\fromthe\tothe M @>>> 0
\end{CD}$$ in $\azmd$. That the middle vertical map is an isomorphism follows from \eqref{eqproj}. The diagram shows that $L\eta_M$ is surjective. 
But since $M$ was arbitrary, $L\eta_K$ is surjective, which implies $L\eta_M$ is injective. 

This establishes $L\fromthe\circ \tothe\cong \id_{\bigoplus_{I\subseteq J}\mathcal{C}_I}$.
\end{proof}

\begin{cor}
The following statements are equivalent.
\begin{enumerate}
\item The category $\bzmd$ is highest weight.
\item The embedding \[\bigoplus_{I\subseteq J}\mathcal C_I\xrightarrow \tothe \bzmd\] from Proposition \ref{into the singular category} is an equivalence of (highest weight) categories.
\item The category $\bzmd$ is equivalent to a direct summand of $\azmd$.
\end{enumerate}
\end{cor}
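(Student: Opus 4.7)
The plan is to establish the equivalences via a cyclic chain $(2) \Rightarrow (3) \Rightarrow (1) \Rightarrow (2)$, with the first two implications being formal and the last requiring the substantive work.

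For $(2) \Rightarrow (3)$: Proposition \ref{regdecomp} decomposes $\azmd$ as $\bigoplus_{I \subseteq S_p} \mathcal{C}_I$, so $\bigoplus_{I \subseteq J} \mathcal{C}_I$ is already a direct summand of $\azmd$; if it is equivalent to $\bzmd$ under $(2)$, then $(3)$ is immediate. For $(3) \Rightarrow (1)$: Theorem \ref{regqha} makes $\azmd$ a highest weight category, and any direct summand of a highest weight category that is cut out by a union of blocks (as is the summand in $(3)$, by the decomposition of Proposition \ref{regdecomp}) inherits the highest weight structure from the ambient category---its standards, costandards, and indecomposable projectives are those of the ambient category associated with the simples it contains.

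The main content is $(1) \Rightarrow (2)$. The functor $\tothe$ is already fully faithful by Proposition \ref{into the singular category} (since $L\fromthe \circ \tothe \cong \mathrm{id}$), so it suffices to establish essential surjectivity. My approach is to show that $\tothe$ carries a projective generator of $\bigoplus_{I \subseteq J} \mathcal{C}_I$ to a projective generator of $\bzmd$, that is,
\[
\tothe \zp(w.\lambda) \cong \zp(w.\mu)
\]
for $w$ in the relevant index set. Lemma \ref{rev1} already supplies a surjection $\zp(w.\mu) \surj \tothe \zp(w.\lambda)$, so the task reduces to a character/multiplicity comparison. I would translate the $\Delta^0$-filtration of $\zp(w.\lambda)$ through $\tothe$: Proposition \ref{tothe} identifies the surviving sections as the $\Delta^0(x.\mu)$ with $x \in W^J$, so the $\Delta^0$-multiplicities of $\tothe \zp(w.\lambda)$ are expressible in terms of those of $\zp(w.\lambda)$, which are known via Corollary \ref{reciprocity} applied in $\azmd$. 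On the target side, hypothesis $(1)$ is precisely what provides BGG reciprocity inside $\bzmd$, yielding $(\zp(w.\mu) : \Delta^0(x.\mu)) = [\Delta^0(x.\mu) : L(w.\mu)]$. Exactness of $\tothe$ on composition series---sending $L(y.\lambda) \mapsto L(y.\mu)$ for $y \in W^J$ and to $0$ otherwise---matches the two multiplicity tables, so the surjection from Lemma \ref{rev1} must be an isomorphism. Being fully faithful and sending a projective generator to a projective generator, $\tothe$ is then an equivalence.

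I expect the main obstacle to lie in the bookkeeping of this multiplicity match: one has to pair the right descent set blocks $\mathcal{C}_I$ on the regular side with the appropriate labels on the singular side, and verify that hypothesis $(1)$ is used exactly where needed, namely to invoke reciprocity on $\bzmd$. Without $(1)$, the multiplicities $(\zp(w.\mu):\Delta^0(x.\mu))$ are not otherwise controlled, which is precisely what makes the equivalence $(1) \Leftrightarrow (2)$ non-trivial.
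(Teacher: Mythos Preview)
Your proposal is correct and follows essentially the same approach as the paper: the same cyclic chain, the same multiplicity comparison via reciprocity on both sides together with Proposition \ref{tothe} and Lemma \ref{rev1} to obtain $\tothe\zp(w.\lambda)\cong\zp(w.\mu)$. The only cosmetic difference is in the last step: the paper concludes by showing $\tothe\circ L\fromthe\cong\id_{\bzmd}$ via the presentation argument from Proposition \ref{into the singular category}, whereas you invoke ``fully faithful plus exact plus sends a progenerator to a progenerator'' --- both are standard and equivalent ways to finish.
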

\begin{proof}
(2) $\Rightarrow$ (3) is obvious, and (3) $\Rightarrow$ (1) is because $\azmd$ is highest weight. Let us assume (1) and prove (2).

For $w\in W^+, R(w)\subseteq J$ the module $\tothe\zp(w.\lambda)$ has a $\zd$-filtration by Proposition \ref{tothe} (and exactness of $\tothe$).
In fact, Proposition \ref{tothe} shows that 
\[(\tothe\zp(w.\lambda):\zd(x.\mu))=(\zp(w.\lambda):\zd(x.\lambda))\] for each $x\in W^J$.
On the other hand, since $\bzmd$ is a highest weight category, the projective module $\zp(w.\mu)$ has a $\zd$-filtration and the multiplicity satisfies
\[(\zp(w.\mu):\zd(x.\mu))=[\zn(x.\mu):L(w.\mu)].\]
But by Proposition \ref{tothe} and Proposition \ref{tr}(3), we have
\begin{align*}
[\zn(x.\mu):L(w.\mu)]&=[\zn(x.\lambda):L(w.\lambda)].
\end{align*}
Finally using that $\azmd$ is highest weight, we conclude
\[(\tothe\zp(w.\lambda):\zd(x.\mu))=(\zp(w.\mu):\zd(x.\mu)).\]
Since $\tothe\zp(w.\lambda)$ is a quotient of $\zp(w.\mu)$ by Lemma \ref{rev1}, this shows that 
\[\tothe \zp(w.\lambda)\cong \zp(w.\mu).\] 

Now we have 
\[\tothe L\fromthe \zp\cong \zp \] for any projective $A_0^\mu$-module $P^0$. 
Then $\tothe L\fromthe\cong \id_{\bzmd}$ follows from the same argument as in the proof of Proposition \ref{into the singular category}.
This gives (2).
\end{proof}

Proposition \ref{newlinkage} can be stated in terms of characters. Since $\ch\zd(w.\lambda)=\ch L_\zeta(w.\lambda)$, we can write the character as a sum or irreducible characters:
\begin{equation}\label{chlinkage}
\ch L_\zeta(w.\lambda)=\sum_{x\leq w}c_{x,w}\ch L(x.\lambda),
\end{equation} with $c_{x,w}\in \Z_{\geq 0}$.
Then Proposition \ref{newlinkage} tells us that $c_{x,w}=0$ unless $R(x)=R(w)$.
By Lusztig's character formula for quantum groups at roots of unity (proved by works of Kazhdan-Lusztig, Lusztig, Kashiwara-Tanisaki, and Andersen-Jantzen-Soergel), we know that 
\[\ch L_\zeta(w.\lambda)=\sum_y (-1)^{l( w)-l( y)} P_{ y,  w}(-1) \chi( y. \lambda),\]
where $P_{y,w}$ is the Kazhdan-Lusztig polynomial associated to the affine Weyl group $W_p$.
So the $c_{x,w}$ in \eqref{chlinkage} determine the irreducible characters for $\operatorname{Rep}(G)$ which is not known in many cases. 
If $p$ is large enough, the irreducible $G$-characters are computed by Lusztig's conjecture for algebraic groups proved in \cite{AJS}, while there are some $p$ (can be much larger than the Coxeter number), found in \cite{wilcountex}, where Lusztig's conjecture does not hold. 
In the latter case, Proposition \ref{newlinkage}, or even Proposition \ref{mult} is a lot of information on irreducible characters. 
There is a more general conjecture by Riche-Williamson \cite{RW} that provides an indirect way to compute the characters using the $p$-Kazhdan-Lusztig polynomials, but these are far less understood than the Kazhdan-Lusztig polynomials, and we do not know the relation to Propositions \ref{newlinkage}, \ref{mult}.
We do not know, for example, whether the Riche-Williamson conjecture implies Proposition \ref{newlinkage}.

The formula in Lusztig's conjecture for algebraic groups is of the same form as in the quantum case, but a restriction on the weights need to be added. 
The reader may compare the two Steinberg tensor product theorems \eqref{steinG} and \eqref{st quan} to see this.
We avoid talking about the weight restriction by presenting the conjecture in different words. 
\begin{lus}
For a regular weight $\lambda'\in X^+$, write $\lambda'=\lambda_0+p\lambda_1$ with $\lambda_0\in X_1=\{\gamma\in X^+\ |\ \langle \gamma
+\rho,\alpha^\vee\rangle <p,\ \ \forall \alpha\in \Pi\}$ and $\lambda_1\in X^+$, the following isomorphism of $G$-modules holds.
\begin{equation*}
\zd(\lambda')\cong L(\lambda_0)\otimes_k \Delta(\lambda_1)^{[1]} 
\end{equation*}
(The module on the left hand side is often called $\Delta^p(\lambda')$. In general, we have $\zd(\lambda')\surj L(\lambda_0)\otimes_k \Delta(\lambda_1)^{[1]}$.)
\end{lus}

This formulation uses the following analogue of Steinberg's tensor product theorem (see \cite[Theorem 2.7]{Lin} or \cite[Proposition 1.7]{CPS7}):
\begin{equation}
\zd(\lambda_0+p\lambda_1)\cong\zd(\lambda_0)\otimes\Delta(\lambda_1)^{[1]}
\end{equation}

For the rest of the section, we assume that $p$ is such that Lusztig's conjecture is true. Recall that this is true if $p$ is large enough.

\begin{prop}\label{lcfimplies}
If $p$ is such that Lusztig's conjecture is true for $G$, then Proposition \ref{newlinkage} is true (without Assumption \ref{assum}).
\end{prop}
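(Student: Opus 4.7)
The strategy is to combine Lusztig's conjecture with the Steinberg tensor product theorem to describe the composition factors of $\zd(w.\lambda)$ explicitly, and then to reduce the equality $R(w)=R(x)$ to a combinatorial property of the affine Weyl group $W_p$.

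Fix $w\in W^+$ and write $w.\lambda=\lambda_0+p\lambda_1$ with $\lambda_0\in X_1$ and $\lambda_1\in X^+$. Lusztig's conjecture gives
\[
\zd(w.\lambda)\cong L(\lambda_0)\otimes_k \Delta(\lambda_1)^{[1]}.
\]
The Steinberg tensor product theorem \eqref{steinG} identifies $L(\lambda_0)\otimes L(\mu)^{[1]}$ with $L(\lambda_0+p\mu)$ for $\mu\in X^+$. Applying this to a composition series of $\Delta(\lambda_1)^{[1]}$ tensored by $L(\lambda_0)$, the composition factors of $\zd(w.\lambda)$ are precisely the simples $L(\lambda_0+p\mu)$, each with multiplicity $[\Delta(\lambda_1):L(\mu)]$, where $\mu$ ranges over weights such that $L(\mu)$ is a composition factor of $\Delta(\lambda_1)$. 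In particular, if $[\zd(w.\lambda):L(x.\lambda)]\neq 0$, then $x.\lambda=\lambda_0+p\mu$ for some such $\mu$, so $w.\lambda$ and $x.\lambda$ share the same restricted part $\lambda_0$.

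By the linkage principle applied to $\Delta(\lambda_1)$, the weight $\mu$ lies in the $W_p$-dot-orbit of $\lambda_1$, so $\mu-\lambda_1$ lies in the root lattice $\mathbb{Z} R$. Hence $x.\lambda-w.\lambda\in p\mathbb{Z} R$, which is the translation sublattice of $W_p$; since $W_p$ acts freely on regular weights, one obtains $w=t_{p(\lambda_1-\mu)}\cdot x$ as elements of $W_p$, where $t_{p(\lambda_1-\mu)}$ is a pure translation.

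It remains to establish the combinatorial claim: for $w,x\in W^+$ with $wx^{-1}$ a translation in $p\mathbb{Z} R$, one has $R(w)=R(x)$. The inclusion $R(w)\subseteq R(x)$ is exactly Proposition~\ref{mult}. For the reverse, my plan is to argue that for $w\in W^+$, the right descent set $R(w)$ depends only on the image of $w$ in the finite Weyl group $W_f=W_p/(p\mathbb{Z} R)$; since $w$ and $x$ have the same image under this projection, this would yield $R(w)=R(x)$. The main obstacle is verifying this dependence within $W^+$: geometrically, the alcoves $w.C^-$ and $x.C^-$ differ by translation by $\eta\in p\mathbb{Z} R$, so the $S_p$-labeling of their walls is preserved under this translation; the rigorous verification requires a careful length analysis showing that right multiplication by each $s\in S_p$ changes the length of $w$ in the same way as it changes that of $x$, which is nontrivial because $\eta$ need not be strictly dominant even though $\lambda_1-\mu$ is a sum of positive roots.
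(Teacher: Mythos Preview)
Your reduction is exactly the paper's: Lusztig's conjecture plus Steinberg identifies the composition factors of $\zd(w.\lambda)$ as $L(\lambda_0+p\gamma)$ with $\lambda_1-\gamma\in\Z R$, so $w=tx$ for a pure translation $t\in p\Z R$, and the problem becomes the combinatorial claim that $R(w)=R(tw)$ whenever $w,tw\in W^+$ and $t$ is a translation. The paper isolates this as Lemma~\ref{elem}. Your invocation of Proposition~\ref{mult} for one inclusion is correct but unnecessary, since the combinatorial lemma is symmetric in $w$ and $tw$ and gives both inclusions at once.

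Where you stop short is in proving that lemma, and your worry about a ``careful length analysis'' and the non-dominance of $\eta$ is misplaced. The paper's argument avoids lengths entirely by working with the alcove order: for $w\in W^+$ one has $ws>w$ if and only if $w.\lambda\uparrow ws.\lambda$ (Proposition~\ref{ordersame}), and in fact $w.\lambda\uparrow_{s_{\alpha,m}} ws.\lambda$ for a single affine reflection. Translating by $p\nu$ simply shifts the reflecting hyperplane, giving $w.\lambda+p\nu\uparrow_{s_{\alpha,m+\langle p\nu,\alpha^\vee\rangle}} ws.\lambda+p\nu$, i.e.\ $tw.\lambda\uparrow tws.\lambda$, hence $tw<tws$. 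No hypothesis on the sign of $\nu$ is needed; all that matters is that $w,tw\in W^+$ so that Proposition~\ref{ordersame} applies on both ends. This is the missing step in your argument, and it is considerably shorter than you anticipate.
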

\begin{proof}
For $w\in W^+$ write \[w.\lambda=\lambda_0+p\lambda_1\] as above.
By the assumption on $p$,
\[\Delta^0(w.\lambda)\cong L(\lambda_0)\otimes_k \Delta(\lambda_1)^{[1]}.\]
Note that $\Delta(\lambda_1)^{[1]}$ has composition series with factors $L(\gamma)^{[1]}$ where $L(\gamma)$ are composition factors of $\Delta(\lambda_1)$. In particular, $\lambda_1 -\gamma$ is in the root lattice $\Z R$. (For any $\gamma\in X$ and a simple root $\alpha$, the difference between $s_{\alpha,m}.\gamma$ and $\gamma$ is in $\Z\alpha$ for any $m\in \Z$. So the statement follows from the linkage principle.)
Since \[L(\lambda_0)\otimes_k L(\gamma)^{[1]}\cong L(\lambda_0+p\gamma)\] is simple, the composition factors of $L(\lambda_0)\otimes_k \Delta(\lambda_1)^{[1]}$ are all of this form.

This shows that if $x\in W^+$ is such that $[\zd(w.\lambda):L(x.\lambda)]\neq 0$, then the weight $x.\lambda$ is of the form
\[x.\lambda=\lambda_0+p\gamma.\]
Thus, \[w.\lambda=t(x.\lambda)=t((x.\lambda)+\rho)-\rho=tx.\lambda\]
where $t=t_{p(\lambda_1-\gamma)}\in W$ is the translation by ${p(\lambda_1-\gamma)}\in  p\Z R$.
The proof is complete using the elementary Lemma \ref{elem} below.
\end{proof}

\begin{lem}\label{elem}Let $w\in W^+$ and $t\in W_p$ be a translation by some element $p\nu$ in $p\Z R$. If $tw\in W^+$, then $R(w)=R(tw)$.  \end{lem}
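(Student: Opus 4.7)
The plan is to establish the following strong claim, from which the lemma is immediate: for any $w' \in W^+$, the right descent set $R(w')$ is determined by the finite Weyl component of $w'$, where we write $w' = t_{\mu'} v$ with $\mu' \in p\Z R$ the translation part and $v \in W_f$ (the finite Weyl group). Granting this, the lemma follows at once: writing $w = t_\mu v$, the translate is $tw = t_{\mu + p\nu} v$, so $w$ and $tw$ share the finite component $v$; since both lie in $W^+$ by hypothesis, $R(w)=R(tw)$.

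To establish the claim, I would use the standard criterion $s \in R(w')$ iff $w'(\alpha_s)$ is a negative affine root, working with the affine root system relative to the base chamber $C^-$. For a finite simple reflection $s_i$ ($i\geq 1$) a direct calculation shows that $w'(\alpha_{s_i})$ is an affine root with classical part $-v\alpha_i$ and level term $\langle \mu', (v\alpha_i)^\vee\rangle$ (under the sign convention appropriate to $C^-$). The condition $w'\cdot C^-_\Z \subset X^+$ unpacks, using $\mu'\in p\Z R$, into the inequalities $\langle \mu',\alpha^\vee\rangle \geq p$ for simple $\alpha$ with $v^{-1}\alpha\in R^+$ and $\langle \mu',\alpha^\vee\rangle \geq 0$ for simple $\alpha$ with $v^{-1}\alpha\in R^-$. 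A short support argument (using that $v^{-1}(v\alpha_i) = \alpha_i$ is simple positive, which forces some simple coroot in the expansion of $(v\alpha_i)^\vee$ to satisfy the stronger bound) extends these to $\langle \mu',(v\alpha_i)^\vee\rangle \geq p$ when $v\alpha_i\in R^+$ and $\leq 0$ when $v\alpha_i\in R^-$. Plugging into the sign analysis then yields $s_i \in R(w')$ iff $v\alpha_i \in R^-$. An entirely analogous computation with the affine simple root $\alpha_{s_0}=(\theta,p)$ coming from the highest root $\theta$ gives $s_0 \in R(w')$ iff $v\theta \in R^+$. Both conditions depend only on $v$, establishing the claim.

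The main technical obstacle is sign-convention bookkeeping. Because the base chamber is $C^-$ rather than a dominant alcove, the simple affine root $\alpha_{s_i}$ for $i\geq 1$ corresponds to the functional $-\langle \cdot,\alpha_i^\vee\rangle$ (which is positive on $C^-$), not $+\langle \cdot,\alpha_i^\vee\rangle$; and one must carefully track the contravariant action of $t_{\mu'}$ on affine functionals. The arithmetic itself is elementary, but the signs are easy to flip. A rank-one sanity check in type $A_1$, where $W_p$ is the infinite dihedral group and every step is explicit, is a useful safeguard before committing to the general computation.
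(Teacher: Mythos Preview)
Your approach is correct but takes a genuinely different route from the paper's. The paper argues geometrically: using the equivalence between the Bruhat order and the $\uparrow$ order on weights in $X^+ + \overline{C^-}$ (established in the Appendix as Proposition~\ref{ordersame}), it observes that a generating relation $w.\lambda \uparrow_{s_{\alpha,m}} ws.\lambda$ is preserved under translation by $p\nu$, becoming $tw.\lambda \uparrow_{s_{\alpha,m+\langle p\nu,\alpha^\vee\rangle}} tws.\lambda$. This immediately gives $s\notin R(w)\Rightarrow s\notin R(tw)$, and the reverse inclusion by symmetry. The whole proof is four lines once Proposition~\ref{ordersame} is in hand.

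Your argument instead proves the stronger statement that $R(w')$ is determined by the finite part $v$ of $w'=t_{\mu'}v$, via the affine root criterion $s\in R(w')\Leftrightarrow w'(\alpha_s)<0$. This is more self-contained (it does not invoke the $\uparrow$--Bruhat comparison) and yields an explicit description of $R(w')$ in terms of $v$, at the cost of more bookkeeping: one must set up the positive affine roots relative to $C^-$ correctly, compute the contravariant action of $t_{\mu'}v$ on them, and run the support argument to control $\langle\mu',(v\alpha_i)^\vee\rangle$. Two small points: in the support argument you only need that $v^{-1}(v\alpha_i)=\alpha_i$ is \emph{positive}, not simple, and the same argument then handles the $s_0$ case with $\theta$ in place of $\alpha_i$; and your sketch tacitly assumes $R$ is irreducible (a single affine node $s_0$), which the paper does not, though the extension to several simple factors is routine.
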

\begin{proof}
Since $w\in W^+$, the condition $ws>w$ for $s\in S_p$ is equivalent to 
\[w.\lambda \uparrow ws.\lambda ,\]
where the relation $\uparrow $ is the ordering on $X=X(T)$ generated by $\uparrow_{s'}$ for all reflections $s'$ in $W_p$, (in other words, $s'=s_{\alpha,m}$ for some $\alpha\in R^+, m\in p\Z$) where 
\[\gamma \uparrow_{s'}\gamma' \Leftrightarrow \gamma<s'.\gamma=\gamma'.\]
(This is well-known, but we explain this in \S\ref{appendix}. The above statement is Proposition \ref{ordersame} applied to $C=C^-$.)
In fact, we have 
\[w.\lambda\uparrow_{s'}ws.\lambda \]
for some $s'=s_{\alpha,m}$.
Then one can check
\[w.\lambda+p\nu\uparrow_{s''} ws.\lambda+p\nu\] for $s''=s_{\alpha,(m+\langle p\nu,\alpha^\vee\rangle)}$.

This shows that
\[tw.\lambda=t(w.\lambda)=w.\lambda+p\nu\uparrow ws.\lambda+p\nu=t(ws.\lambda)=tws.\lambda,\]
which implies $tw<tws,$ again by Proposition \ref{ordersame}.

Therefore, we have $R(w)\supset R(tw)$. The same argument shows the other inclusion.
\end{proof}

\begin{cor}
If $p$ is such that Lusztig's conjecture is true for $G$, then Assumption \ref{assum} is satisfied.
\end{cor}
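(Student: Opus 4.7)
The plan is to deduce the vanishing of $\tothe P^0(w.\lambda)$ from the $\Delta^0$-filtration of $P^0(w.\lambda)$, combined with the already-proved character restriction coming from Lusztig's conjecture (Proposition~\ref{lcfimplies}), and then the composition factor analysis in Proposition~\ref{tothe}.

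First I would fix $w \in W^+ \setminus W^J$; by definition of $W^J$ this means there exists $s \in J$ with $ws < w$, i.e.\ $s \in R(w) \cap J$. Next, by Theorem~\ref{regqha} the regular algebra $A^\lambda_0$ is quasi-hereditary, so $P^0(w.\lambda)$ admits a $\Delta^0$-filtration. By the Brauer--Humphreys reciprocity of Corollary~\ref{reciprocity}, each section $\Delta^0(y.\lambda)$ that appears satisfies
\[
(P^0(w.\lambda):\Delta^0(y.\lambda)) = [\Delta^0(y.\lambda):L(w.\lambda)] \neq 0.
\]
Applying Proposition~\ref{lcfimplies} (which establishes Proposition~\ref{newlinkage} unconditionally once Lusztig's conjecture for $G$ is granted), each such $y$ must satisfy $R(y) = R(w)$. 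In particular, the chosen $s \in R(w) \cap J$ lies in $R(y)$, so $y \notin W^J$.

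Finally, Proposition~\ref{tothe} gives $\tothe \Delta^0(y.\lambda) = 0$ for every such $y$. Since $\tothe$ is exact (Proposition~\ref{ztothe}) and kills every section of a $\Delta^0$-filtration of $P^0(w.\lambda)$, we conclude $\tothe P^0(w.\lambda) = 0$, which is exactly Assumption~\ref{assum}.

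I do not foresee a genuine obstacle here: all the ingredients (the $\Delta^0$-filtration of $P^0(w.\lambda)$, BGG-type reciprocity, the action of $\tothe$ on standard objects indexed outside $W^J$, and the refined linkage $R(y) = R(w)$) have already been established earlier in the paper. The only mild point to verify carefully is that ``$y \notin W^J$'' is the correct reading of $R(y) \cap J \neq \emptyset$, which is immediate from the characterization of minimal-length coset representatives. So the corollary should be a short direct consequence of Proposition~\ref{lcfimplies}, Corollary~\ref{reciprocity}, Theorem~\ref{regqha}, and Proposition~\ref{tothe}.
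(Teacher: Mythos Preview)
Your argument is correct and matches the paper's own proof essentially line for line: take the $\Delta^0$-filtration of $P^0(w.\lambda)$, use reciprocity to see that every section $\Delta^0(y.\lambda)$ satisfies $R(y)=R(w)$ via Proposition~\ref{lcfimplies}, hence $y\notin W^J$, and then apply Proposition~\ref{tothe} together with exactness of $\tothe$. The only cosmetic slip is that exactness of $\tothe$ is not the content of Proposition~\ref{ztothe} (which concerns the restriction to $A_0$-modules); it is the general exactness of translation functors recalled before Proposition~\ref{tr}.
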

\begin{proof}
Let $w$ be as in Assumption \ref{assum}, that is, $w\not\in W^J$.
It is enough to show that each section of a $\Delta^0$-filtration of $P^0(w.\lambda)$ is mapped to zero under the exact functor $\tothe$. Let $\Delta^0(x.\lambda)$ be such a (nonzero) section.
Since
\[(P^0(w.\lambda):\Delta^0(x.\lambda))=[\Delta^0(x.\lambda):L(w.\lambda)],\]
Proposition \ref{lcfimplies} implies $R(x)=R(w)$. 
In particular, $x\not\in W^J$. 
Now Proposition \ref{tothe} shows that $\tothe\Delta^0(x.\lambda)=0$.
\end{proof}

\begin{appendix}
\section{Affine Weyl group and ordering on the weights}\label{appendix}
We explain here why we take the top antidominant alcove $C^-$ (the one that contains $-2\rho$) for the fundamental domain of the affine Weyl group action, as opposed to the bottom dominant alcove $C^+$ (which contains 0). We also make explicit some combinatorics of the affine Weyl group used in the paper, providing precise definitions of the objects involved.

For each $\alpha\in R^+$ and $m\in \Z$, the affine reflection $s_{\alpha,mp}\in W_p$ acts on $X\otimes_\Z\R$ as
\[s_{\alpha,mp}.(v)=v-(\langle v+\rho,\alpha^\vee\rangle-mp)\alpha\] for $v\in X\otimes_\Z\R$. 
The reflection hyperplanes divides $X\otimes_\Z\R$ into alcoves, which are by definition subsets of $X\otimes_\Z \R$ of the form
\begin{equation}\label{alcove}
C=\{v\in X\otimes_\Z\R\ |\ (m_\beta -1)p<\langle v+\rho,\beta^\vee\rangle <m_\beta p,\ \forall \beta\in R^+\},
\end{equation}
 where $m_\beta=m_\beta^C\in\Z$.
For the alcoves $C^+$ and $C^-$, we have $m^{C^+}_\beta=1$ for all $\beta\in R^+$, and $m^{C^-}_\beta=0$ for all $\beta\in R^+$.
For each alcove $C$, the reflections through walls of $C$ form a generator set for the Coxeter group $W_p$. We denote this generating set by $S_p(C)$. Each $\overline C\cap X$ is a fundamental domain for the dot action of $W_p$ on $X$.
Thus, the Coxeter ordering ``$\leq$'' on ($W_p,S_p(C)$) induces an order relation on $X$.
Let us first consider generally all such order relations:
\begin{defn}
For each alcove $C$, the partial order relation $\leq_C$ on $X$ is defined as follows.
For $\lambda,\mu\in X$, we say $\lambda\leq_C\mu$ if there exists a weight $\nu\in \overline C$ such that $\lambda=w.\nu$ and $\mu=x.\nu$ with $w\leq x\in W_p$.
\end{defn}

Now fix an alcove $C$ and define two functions $d=d_C$ and $d'=d'_C$ on the alcoves by
\[d(C')=\sum_{\alpha\in R} (m^{C'}_\alpha-m^C_\alpha),\]
and
\[d'(C')=\sum_{\alpha\in R} |m^{C'}_\alpha-m^C_\alpha|.\]
So $d'(C')$ is the number of reflection hyperplanes that separate $C$ and $C'$. 
If $C'\subset X^++C$, then $m^{C'}_\alpha-m^C_\alpha\geq 0$ for all $\alpha\in R^+$, and hence $d(C')=d'(C')$.
An easy fact is the following lemma whose proof is left to the reader.
\begin{lem}\label{exercise}
We have $l(w)=d'(w.C)$ for all $w\in W_p$.
\end{lem}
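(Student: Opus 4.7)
The plan is to proceed by induction on $l(w)$, reinterpreting $d'(w.C)$ geometrically as the number of reflection hyperplanes separating $C$ from $w.C$. To justify this reinterpretation, I would unpack the definition: for each $\alpha \in R^+$, the quantity $|m^{w.C}_\alpha - m^C_\alpha|$ counts the integers $k$ lying strictly between $m^C_\alpha$ and $m^{w.C}_\alpha$ (one checks this directly from \eqref{alcove}), and each such $k$ corresponds to an $\alpha$-hyperplane $\{v : \langle v+\rho, \alpha^\vee\rangle = kp\}$ separating $C$ from $w.C$. Summing over $\alpha \in R^+$ gives $d'(w.C) = |N(w)|$, where $N(w)$ denotes the set of reflection hyperplanes separating $C$ from $w.C$.

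The base case $w = e$ is immediate: $N(e) = \emptyset$ and $l(e) = 0$. For the inductive step, choose a reduced factorization $w = w's$ with $s \in S_p(C)$, so $l(w) = l(w') + 1$. Since $s \in S_p(C)$ fixes a specific wall $H_s$ of $C$, the alcoves $w'.C$ and $w.C = w'.(s.C)$ are adjacent across the single hyperplane $H := w'.H_s$. Every other reflection hyperplane has $w'.C$ and $w.C$ on the same side, so
\[
N(w) = N(w') \mathbin{\triangle} \{H\}.
\]

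The key step — and really the only substantive point — is to show that reducedness of $w's$ forces $H \notin N(w')$, so that $|N(w)| = |N(w')| + 1$, and the induction closes via $|N(w')| = l(w')$. This is the standard Coxeter-theoretic fact that for any $w' \in W_p$ and $s \in S_p(C)$,
\[
l(w's) > l(w') \iff w'.H_s \notin N(w').
\]
I would cite this from Humphreys, \emph{Reflection Groups and Coxeter Groups}, \S 4.5 and \S 5.7 (or derive it independently, e.g.\ by combining the strong exchange condition with an induction on $l(w')$: if $H \in N(w')$, one produces a reduced expression of $w'$ ending in $s$, contradicting $l(w's) > l(w')$).

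The main obstacle is thus pinning down the equivalence of reducedness with non-separation by $w'.H_s$; the rest is bookkeeping. Once that equivalence is in hand, the induction yields $l(w) = |N(w)| = d'(w.C)$ as desired.
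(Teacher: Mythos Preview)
Your argument is correct; the paper leaves this lemma as an exercise with no proof given, so there is nothing to compare against, and your induction on $l(w)$ via the separating-hyperplane count is the standard route. One small slip in your bookkeeping: the separating $\alpha$-hyperplanes correspond to integers $k$ in the half-open range $[\min(m^C_\alpha, m^{w.C}_\alpha),\, \max(m^C_\alpha, m^{w.C}_\alpha))$, not to those \emph{strictly between} the two values --- but the count $|m^{w.C}_\alpha - m^C_\alpha|$ is nonetheless right, and in any case the paper already records just before the lemma that $d'(C')$ equals the number of separating hyperplanes, so you may simply invoke that.
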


Similarly define $d(\lambda), d'(\lambda)$ for $\lambda\in X$ using the fact that $\lambda$ belongs to the upper closure of a unique alcove $C'$, that is, let $d(\lambda)=d(C')$ and $d'(\lambda)=d'(C')$ for such $C'$.

\begin{defn}\label{uparrow}
 For each reflection $s'=s_{\alpha,m}$, where $\alpha\in R^+, m\in p\Z$, define $\uparrow_{s'}$ by 
\[\gamma \uparrow_{s'}\gamma' \Leftrightarrow \gamma<s'.\gamma=\gamma'.\]
The uparrow ordering $\uparrow$ on $X$ is defined as the order relation generated by all such $\uparrow_{s'}$. (Here ``$<$'' is the dominance order. That is, $\gamma<\gamma'$ if and only if $\gamma'-\gamma$ is a linear combination of fundamental weights with positive coefficients.)
\end{defn}

Now we compare the two order relations.
\begin{prop}\label{ordersame}
Let $C$ be any alcove. Suppose $\lambda,\mu\in X^++\overline C$. Then we have $\lambda\leq_C\mu$ if and only if $\lambda\uparrow\mu$.
\end{prop}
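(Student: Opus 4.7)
The plan is to prove both directions by reducing to single-step comparisons, then matching them via a common geometric description. The uparrow ordering is the transitive closure of the single-step relations $\gamma\uparrow_{s'}s'.\gamma$; meanwhile the Bruhat order on any Coxeter system has the chain/cover property, so $w<_C x$ is equivalent to the existence of a chain $w=w_0<w_1<\cdots<w_k=x$ in which each $w_{i+1}=t_iw_i$ is multiplication by a reflection $t_i\in W_p$ with $l(w_{i+1})=l(w_i)+1$. Applying the dot action produces weights linked by single affine reflections $t_i.(w_i.\nu)=w_{i+1}.\nu$. Since every reflection in $W_p$ has the form $s_{\alpha,mp}$ with $\alpha\in R^+$ and $m\in\Z$, it therefore suffices to prove the following single-step equivalence: for $w\in W_p$, $\nu\in\overline C$, and a reflection $t=s_{\alpha,mp}$ with $\alpha\in R^+$, if $w.\nu,tw.\nu\in X^++\overline C$, then $l(tw)>l(w)$ iff $w.\nu\leq tw.\nu$ in the dominance order.

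The key geometric fact is: for any $w\in W_p$ and reflection $t\in W_p$, $l(tw)>l(w)$ iff $C$ and $w.C$ lie on the same side of the hyperplane $H_t$ fixed by $t$. This follows immediately from Lemma \ref{exercise} together with the observation that passing from $w.C$ to $tw.C$ reflects across $H_t$, toggling whether $H_t$ itself separates the alcove from $C$ while permuting the remaining separating hyperplanes. On the weight side, the reflection formula $t.v=v-(\langle v+\rho,\alpha^\vee\rangle-mp)\alpha$ with $\alpha\in R^+$ shows that $w.\nu<tw.\nu$ in the dominance order iff $\langle w.\nu+\rho,\alpha^\vee\rangle<mp$, i.e.\ iff $w.\nu$ lies on the side $\langle\cdot+\rho,\alpha^\vee\rangle<mp$ of $H_{\alpha,mp}$.

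It remains to identify ``alcove side'' with ``weight side''. From \eqref{alcove}, the alcove $C$ lies on the side $\langle\cdot+\rho,\alpha^\vee\rangle<mp$ of $H_{\alpha,mp}$ iff $m\geq m_\alpha^C$. Writing $w.\nu=x+c$ with $x\in X^+$ and $c\in\overline C$ (possible since $w.\nu\in X^++\overline C$), one computes $\langle w.\nu+\rho,\alpha^\vee\rangle=\langle x,\alpha^\vee\rangle+\langle c+\rho,\alpha^\vee\rangle\geq(m_\alpha^C-1)p$, which forces $w.\nu$ and $w.C$ onto the same side of $H_{\alpha,mp}$; this is precisely where the hypothesis $\lambda,\mu\in X^++\overline C$ enters. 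Combined with the key geometric fact, this yields the single-step equivalence, and hence both implications of the proposition.

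The main obstacle I anticipate is handling degenerate cases: if $w.\nu$ lies on $H_t$ then $tw.\nu=w.\nu$, so a length-increasing step produces only the trivial weight relation (harmless, since $\uparrow$ is reflexive); and if $\nu\in\overline C$ is singular, different $w\in W_p$ can yield the same weight $w.\nu$, so we must work with minimal-length coset representatives modulo $\stab_{W_p}(\nu)$ to ensure that strict length-increasing chains in $W_p$ translate to the desired uparrow chain in $X$.
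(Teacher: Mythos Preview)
Your strategy---reduce to single-reflection steps and read off ``which side of the hyperplane'' from alcove geometry---matches the paper's, though the paper reduces to \emph{adjacent} alcoves (right multiplication by a simple $s\in S_p(C)$) and argues via the functions $d,d'$ rather than via Bruhat covers by arbitrary reflections. One local step in your sketch is garbled: the bound $\langle w.\nu+\rho,\alpha^\vee\rangle\geq(m_\alpha^C-1)p$ does not ``force $w.\nu$ and $w.C$ onto the same side'' of $H_{\alpha,mp}$ (that is automatic from $w.\nu\in\overline{w.C}$); what it actually does---once applied to whichever of $w.\nu,\,tw.\nu$ lies on the ``$<mp$'' side---is force $m\geq m_\alpha^C$, i.e.\ put the base alcove $C$ itself on the ``$<mp$'' side, and only then does your key geometric fact line up with the dominance comparison.

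The real gap is in the reduction. Your single-step equivalence requires both endpoints to lie in $X^++\overline C$, but when you concatenate a Bruhat chain $w=w_0\lessdot\cdots\lessdot w_k=x$ or an $\uparrow$-chain $\lambda_0\uparrow\cdots\uparrow\lambda_k$, the intermediate weights need not stay in $X^++\overline C$: already a Bruhat cover of $e$ by the simple reflection through a lower wall of $C$ leaves the region. So ``it suffices to prove the single-step equivalence'' is not yet justified. The paper's reduction to the ``generating case'' is admittedly just as terse, but its route via $d=d'$ yields the global identity $l(w)=d(w.C)$ on all of $X^++\overline C$, and this, combined with the adjacent-alcove description of $\uparrow$ (cf.\ \cite[II.6.5--6.8]{J}), is what one actually needs to close the argument. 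As it stands, your proposal is missing either a proof that suitable chains can be kept inside $X^++\overline C$, or a reorganisation along these lines.
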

\begin{proof}
It is enough to consider the generating case: Let $\lambda=w.\nu$ and $\mu=ws.\nu$ for $\nu\in\overline C$ and $s\in S_p(C)$. We can also assume that $\lambda\neq\mu$, so $s$ is not a stabilizer of $\nu$. Then one finds unique $\alpha\in R^+$ and $m\in \Z$ such that $s_{\alpha,mp}.(w.\nu)=s_{\alpha,mp}w.\nu=ws.\nu$. 
For all $\alpha\neq\beta\in R^+$, there are $m_\beta\in\Z$ such that
\[(m_\beta -1)p\leq\langle w.\nu+\rho,\beta^\vee\rangle,\langle ws.\nu+\rho,\beta^\vee\rangle\leq m_\beta p,\]
and for $\alpha$, we have either
\begin{equation}\label{first}
(m_\alpha-1)p<\langle w.\nu+\rho,\alpha^\vee\rangle<m_\alpha p<\langle ws.\nu+\rho,\alpha^\vee\rangle<(m_\alpha+1)p
\end{equation}
or
\begin{equation}\label{second}
(m_\alpha-1)p<\langle ws.\nu+\rho,\alpha^\vee\rangle<m_\alpha p<\langle w.\nu+\rho,\alpha^\vee\rangle<(m_\alpha+1)p.
\end{equation}
Here \eqref{first} is when $w.\nu<ws.\nu$, or $\lambda\uparrow\mu$, and \eqref{second} is when $ws.\nu<w.\nu$, or $\mu\uparrow\lambda$.
Since both $w.\nu$ and $ws.\nu$ are in $X^++\overline C$, we know that $m_\beta\geq m_\beta^C$ where $m^C_\beta$ is as in \eqref{alcove}.
This implies $d(w.C)=d'(w.C)$ and $d(ws.C)=d'(ws.C)$.

Now, $w.\nu=\lambda<_C\mu=ws.\nu$ is equivalent to $l(w)<l(ws)$ and, by Lemma\ref{exercise}, to $d'(w.C)<d'(ws.C)$. 
This is the case where \eqref{first} is true, which is equivalent to $\lambda\uparrow\mu$.
\end{proof}
\begin{rem}
The condition in the proposition is about the minimal condition for the two order relations to coincide. For example, if $\lambda=e.\nu\in \overline C$ and $\mu=s.\nu\not\in X^++\overline C$ for some $s\in S_p(C)$ (there always exists such an $s$) then $\mu\leq\lambda$ and $e\leq s$. Similar things happen along the boundary of the shifted dominant region $X^++\overline C$.
\end{rem}

Note that $X^+=X^++C^+\subset X^++\overline{C^+}$. So the uparrow relation and $\leq_{C^+}$ agree in the dominant cone. But as remarked above, they do not agree near the boundary of the dominant cone. This is not desirable when we consider right descent sets. To determine the right descent set of an element $w\in W_p$, we compare $w$ with $ws$ for each $s\in S(C)$. It is possible that $w\in W^+$ while $ws\not\in W^+$, so $ws.C$ is out of the dominant region. In that case, we have
$T_s L(w.0)=0$ while $s\not\in R(w)$ and $T_s=T_0^\mu$ is a translation to the $s$-wall. So we do not want to say, for example,
\[T_0^\mu L(w.0)=\begin{cases}
L(w.\mu)\ \ \ \ \ \ \ \ \ \text{if } s\not\in R(w)\\
0\ \ \ \ \ \ \ \ \ \ \ \ \ \ \ \ \ \text{if } s\in R(w)
\end{cases},\]
because $w.\mu$ may not be dominant while $s\not\in R(w)$. 
In particular, if we indexed the weights using $C^+$ in this paper, Proposition \ref{mult} is absurd. 

In the paper, we used $C^-$ to index the weights, hence the order relation $\leq=\leq_{C^-}$. 
As a consequence, for $w\in W^+$, we have $ws.C\in X^++C^-$ for each $s\in S=S(C^-)$. So we can apply Proposition \ref{ordersame} to have
\[s\in R(w)\Leftrightarrow ws.C\uparrow w.C ,\]
and 
\[T_\lambda^\mu L(w.\lambda)=\begin{cases}
L(w.\mu)\neq 0 \ \ \ \ \text{if } s\not\in R(w)\\
0\ \ \ \ \ \ \ \ \ \ \ \ \ \ \ \ \ \text{if } s\in R(w)
\end{cases}\]
is true where $\lambda\in C^-$.
This works with any other antidominant alcove instead of $C^-$, but those will make the correspondence between $W_p$ (or ``$W^+$'') and weights rather complicated. 

An easy example to compare the two choices for the fundamental domain, $C^+$ and $C^-$, is the following. Consider the trivial module $L(0)$. 
We write it as $L(e.0)$ with $0\in C^+$, or as $L(w_0.-2\rho)$ with $-2\rho\in C^-$. (Recall that $w_0$ is the longest element in the finite Weyl group $W_f\leq W_p$.)
The right descent sets in the two cases are $R(e)=\emptyset$ and $R(w_0)=S_p\setminus S_f$, respectively.
This shows that the statement for Proposition \ref{mult} cannot be as concise as it is now, if we index the weights using $C^+$. As written, it would claim that $L(0)$ is not a composition factor of any $\zd(\gamma)$. But, for example, $\zd(p(s_0.0))$ contains $L(0)$ since it surjects to $\Delta(s_0.0)^{[1]}\cong L(0)\otimes_k\Delta(s_0.0)^{[1]}$ where $s_0=s_{\alpha_0,p}\in S_p(C^+)$.
\end{appendix}

\bibliographystyle{abbrv}

\begin{thebibliography}{99}
\bibitem{AJS}
Andersen H H, Jantzen J C, Soergel W.
\newblock Representations of quantum groups at a {$p$}th root of unity and of
  semisimple groups in characteristic {$p$}: independence of {$p$}.
\newblock {\em Ast\'erisque}, (220):321, 1994.

\bibitem{APW}
Andersen H H, Polo P, Wen K X.
\newblock Representations of quantum algebras.
\newblock {\em Invent. Math.}, 104(1):1--59, 1991.

\bibitem{CPShwc}
Cline E, Parshall B, Scott L.
\newblock Finite-dimensional algebras and highest weight categories.
\newblock {\em J. Reine Angew. Math.}, 391:85-99, 1988.

\bibitem{CPS7}
Cline E, Parshall B, Scott L.
\newblock Reduced standard modules and cohomology.
\newblock {\em Trans. Amer. Math. Soc.}, 361(10):5223--5261, 2009.

\bibitem{J}
Jantzen J C.
\newblock {\em Representations of algebraic groups}, volume 107 of {\em
  Mathematical Surveys and Monographs}.
\newblock American Mathematical Society, Providence, RI, second edition, 2003.

\bibitem{mine}
Ko H.
\newblock Cohomology in singular blocks for a quantum group at a root of unity.
\newblock {\em arXiv:1605.04556}, 2016.

\bibitem{Lin}
Lin Z.
\newblock Highest weight modules for algebraic groups arising from quantum
  groups.
\newblock {\em J. Algebra}, 208(1):276--303, 1998.

\bibitem{psforcedintegqha}
Parshall B, Scott L.
\newblock Forced gradings in integral quasi-hereditary algebras with
  applications to quantum groups.
\newblock In {\em Recent developments in {L}ie algebras, groups and
  representation theory}, volume~86 of {\em Proc. Sympos. Pure Math.}, pages
  247--276. Amer. Math. Soc., Providence, RI, 2012.

\bibitem{PS14}
Parshall B, Scott L.
\newblock Q-koszul algebras and three conjectures.
\newblock {\em arXiv:1405.4419}, 2014.

\bibitem{PS15}
Parshall B, Scott L.
\newblock From forced gradings to q-koszul algebras.
\newblock {\em arXiv:1502.06927}, 2015.

\bibitem{pspfiltr}
Parshall B, Scott L.
\newblock On {$p$}-filtrations of {W}eyl modules.
\newblock {\em J. Lond. Math. Soc. (2)}, 91(1):127--158, 2015.

\bibitem{RW}
Riche S, Williamson G.
\newblock Tilting modules and the p-canonical basis.
\newblock {\em arXiv:1512.08296}, 2015.

\bibitem{svv}
Shan P, Varagnolo M, Vasserot E.
\newblock Koszul duality of affine Kac-Moody algebras and cyclotomic rational double affine Hecke algebras.
\newblock {\em Adv. Math.}, 262:370-435, 2014.

\bibitem{wilcountex}
Williamson G.
\newblock Schubert calculus and torsion explosion.
\newblock {\em J. Amer. Math. Soc.}, 30 (2017), 1023-1046.


\end{thebibliography}
\end{document}